\newtheorem{thm}{Theorem}[section]
\newtheorem{prop}[thm]{Proposition}
\newtheorem{lem}[thm]{Lemma}
\newtheorem{cor}[thm]{Corollary}
\theoremstyle{definition}
\newtheorem*{rmk}{Remark}
\newcommand\ff{\mathbf f}
\newcommand\CC{\mathbb C}
\newcommand\RR{\mathbb R}
\newcommand\NN{\mathbb N}
\newcommand\ZZ{\mathbb Z}
\newcommand\PP{\mathbb P}
\newcommand\ord{\operatorname{ord}}
\newcommand\exc{\operatorname{exc}}
\title{Quotient Problem for Entire functions with Moving Targets}
\author{Ji Guo}
\address{Institute of Mathematics, Academia Sinica, 6F, Astronomy-Mathematics Building, No. 1, Sec. 4, Roosevelt Road, Taipei 10617, TAIWAN} \email{jiguo@gate.sinica.edu.tw}
\begin{document}

\begin{abstract}
    As an analogue of the Hadamard quotient problem in number theory, the quotient problem (in the sense of complex entire functions) for two sequences of entire functions has been solved in \cite{guo2019quotient}.  In this paper, we consider the generalization of this problem in which we allow the coefficients to be entire functions of small growth by modifying the second main theorem with moving targets to a truncated version. We also compare our result to a special case in exponential polynomials first studied by Ritt \cite{ritt1929zeros}.
\end{abstract}

\keywords{Hadamard quotient problems, entire functions, moving targets, exponential polynomials}

\thanks{2010\ {\it Mathematics Subject Classification.} Primary 30D30, 
Secondary 32H30,11J97.}
\baselineskip=16truept 
\maketitle \pagestyle{myheadings}

\section{Introduction}

A sequence of numbers $\{{\bf G}(n)\}_{n\in\mathbb N}\subset\CC$ is called a {\it linear recurrence} if ${\bf G}(n+k)=c_0{\bf G}(n)+\dots+c_{k-1}{\bf G}(n+k-1)$ for all $n\in\mathbb N$ and for some constants $c_0,\dots,c_{k-1}\in\CC$.  Equivalently, $\{{\bf G}(n)\}_{n\in\mathbb N}$ has the following expression: 
$$
{\bf G}(n)=\sum_{i=1}^mg_i(n)\alpha_i^n, \quad\text{for all }n\in\mathbb N,
$$
where $g_i\in\mathbb C[X]$ are nonzero polynomials and $\alpha_i\in\mathbb C^*$ are distinct.
The recurrence is called ``simple" when all the $g_i$ are constant.

Analogous to the results in number theory in which  the quotient of two linear recurrences was considered (refer to \cite{corvaja1998diophantine, corvaja2002finiteness, zannier2005diophantine} for an overview), we have established the result on the divisibility of two ``simple linear recurrences of complex functions", generalizing the result in \cite{guo2019asymptotic}. (For similar problems in the non-Archimedean case or the case in several complex variables, one can also refer to \cite{pasten2016gcd} or \cite{liu2017upper} for more discussions.) 
\begin{thm}[\cite{guo2019quotient}]\label{simple}
    Let $l,m\ge 1$ be two positive integers. Let $f_1,\dots,f_l$ and $g_1,\dots,g_m$ be nonconstant entire functions such that $\max_{i=1,\dots,l} T_{f_i}(r)\asymp \max_{j=1,\dots,m} T_{g_j}(r)$. Let 
    $$
    F(n)=a_0+a_1f_1^n+\cdots+a_lf_l^n\quad \text{and}\quad G(n)=b_0+b_1g_1^n+\cdots+b_mg_m^n,
    $$
    where $a_0\in\CC$ and $a_1,\dots,a_l,b_0,\dots,b_m\in\CC^*$.    
    \begin{enumerate}[(i)]
        \item  If  the ratio $F(n)/G(n)$ is an entire function for infinitely many $n\in \ZZ^+$, or 
        \item  $f_1,\dots,f_l$ and $g_1,\dots,g_m$ are all units, i.e. entire functions without zero, and if the ratio $F(1)/G(1)$ is an entire function, 
    \end{enumerate}
    then $f_1^{i_1} \cdots f_l^{i_l}g_1^{j_1}\dots g_m^{j_m}\in K_{\mathbf g}$ for some $(i_1,\dots,i_l,j_1,\dots,j_m)\ne(0,\dots,0) \in\mathbb Z^{l+m}$.
\end{thm}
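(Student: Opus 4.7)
The plan is to prove the theorem via the truncated Second Main Theorem (SMT) of Nevanlinna theory, in parallel with the Subspace-Theorem proof of the arithmetic Hadamard quotient problem due to Corvaja-Zannier. For each $n$ in our (infinite) collection of indices, I would form the holomorphic curve
\[
\Phi_n : \CC \to \PP^{l+m}, \qquad \Phi_n = [1 : f_1^n : \cdots : f_l^n : g_1^n : \cdots : g_m^n],
\]
together with the $l+m+3$ hyperplanes consisting of the $l+m+1$ coordinate hyperplanes and
\[
H_F : a_0 X_0 + a_1 X_1 + \cdots + a_l X_l = 0, \qquad H_G : b_0 X_0 + b_1 X_{l+1} + \cdots + b_m X_{l+m} = 0,
\]
whose pullbacks under $\Phi_n$ are precisely the divisors of $F(n)$ and of $G(n)$.

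Arguing by contradiction, I would suppose no monomial $f_1^{i_1}\cdots f_l^{i_l}g_1^{j_1}\cdots g_m^{j_m}$ with a nonzero exponent tuple is a constant. A Borel-type identity---the classical one for units in case (ii), a variant accommodating zeros in case (i)---then forces $1, f_1^n, \ldots, f_l^n, g_1^n, \ldots, g_m^n$ to be linearly independent over $\CC$, making $\Phi_n$ linearly non-degenerate and (after a minor adjustment when $a_0 = 0$) the chosen hyperplanes in general position. Cartan's truncated SMT yields
\[
2\, T_{\Phi_n}(r) \;\le\; \sum_{k=0}^{l+m} N_{\Phi_n}^{[l+m]}\!\bigl(r, \{X_k=0\}\bigr) \;+\; N_{\Phi_n}^{[l+m]}(r, H_F) \;+\; N_{\Phi_n}^{[l+m]}(r, H_G) \;+\; S_n(r).
\]
The divisibility hypothesis delivers $N_{\Phi_n}^{[l+m]}(r, H_G) \le N_{\Phi_n}^{[l+m]}(r, H_F)$, since every zero of $G(n)$ is a zero of $F(n)$ of at least the same order. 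The coordinate contributions are $O(T_{\max}(r))$, independent of $n$, and vanish identically in case (ii) where the $f_i, g_j$ are units.

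The heart of the argument, and its main obstacle, is estimating $N_{\Phi_n}^{[l+m]}(r, H_F)$. The crude First-Main-Theorem bound $N(r, F(n) = 0) = O(n\, T_{\max}(r))$ matches $T_{\Phi_n}(r)$ in order and yields no contradiction. The multiplicity truncation at level $l+m$ must be used to real effect: zeros of $F(n)$ of multiplicity exceeding $l+m$ force the Wronskian $W(1, f_1^n, \ldots, f_l^n)$ to vanish to controllable order, and this Wronskian is Nevanlinna-theoretically small via the logarithmic derivative lemma. A careful accounting should give $N_{\Phi_n}^{[l+m]}(r, H_F)$ strictly smaller than $T_{\Phi_n}(r)$ by a definite factor depending only on $l$ and $m$, so that together with the growth assumption $\max_i T_{f_i} \asymp \max_j T_{g_j}$ and $T_{\Phi_n}(r) \asymp n\, T_{\max}(r)$, the SMT inequality fails for $n$ sufficiently large. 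This contradiction yields the desired multiplicative relation. In case (ii), the argument only needs $n=1$ since the coordinate counting terms are identically zero.
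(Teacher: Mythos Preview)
Your approach has a genuine gap at its ``heart,'' which you yourself flag but do not resolve: the truncated counting function $N_{\Phi_n}^{[l+m]}(r,H_F)$ simply cannot be bounded by $c\,T_{\Phi_n}(r)$ for any $c<1$ depending only on $l,m$. The Wronskian idea you invoke is already what produces the truncation level $l+m$ in Cartan's SMT; it cannot be reused to squeeze the same term further. Concretely, in the unit case (ii), apply the truncated SMT to the subcurve $[1:f_1^n:\cdots:f_l^n]$ with the coordinate hyperplanes and $H_F$: since the coordinate counting terms vanish, one obtains $N^{[l]}_{F(n)}(0,r)\ge (1-\varepsilon)\,T_{[1:f_1^n:\cdots:f_l^n]}(r)$. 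But $T_{\Phi_n}(r)$ can be as small as $T_{[1:f_1^n:\cdots:f_l^n]}(r)$ (take $l=m=1$ and $T_{f_1}=T_{g_1}$), so your main inequality $(2-\varepsilon)T_{\Phi_n}\le 2N^{[l+m]}_{F(n)}$ becomes $(2-\varepsilon)\le 2$, no contradiction. In case (i) the same obstruction persists: the divisibility hypothesis gives you one free hyperplane ($H_G$ controlled by $H_F$), but a single extra hyperplane over the coordinate ones yields only a gain of $1$ on the left of SMT, which is exactly cancelled by the size of $N^{[l+m]}_{F(n)}$.

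The paper's proof (of the more general moving-target version, which specializes to this statement) does \emph{not} work in $\PP^{l+m}$. It follows the Corvaja--Zannier architecture faithfully: fix auxiliary integers $s,t$, write $G_1(n)=G(n)-b_1g_1^n$, expand $G_1(n)^s q(n)$ via the binomial identity $G_1(n)^s q(n)=F(n)\sum_{k=0}^{s-1}\binom{s}{k}G(n)^{s-1-k}(-b_1g_1^n)^k+(-b_1g_1^n)^s q(n)$, and multiply by all monomials $\mathbf{g}^{n\mathbf{c}}$ with $|\mathbf{c}|\le t$. This produces $M=\binom{m-1+t}{m-1}$ linear forms in a curve $\mathbf{x}:\CC\to\PP^{N-1}$ with $N\sim\binom{m-1+t+s}{m-1}$. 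The SMT applied there yields, after bookkeeping, a term $Msn\,T_{g_1}(r)$ on the favorable side against $N_1aln\,T_{g_1}(r)$ on the other; choosing first $s>al$ and then $t$ large makes $Ms>N_1al$ (both are degree-$(m-1)$ polynomials in $t$ with leading coefficients $s/(m-1)!$ and $al/(m-1)!$), forcing $n$ bounded. The extra parameters $s,t$ are not optional---they are precisely what your single-hyperplane-gain approach is missing.
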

Here, $T_f( r)$ denotes the Nevanlinna characteristic function (refer to Section \ref{secpre}.)
The notation  $T_f(r)\asymp T_g(r)$ means that there exist positive numbers $a,b$ such that  $a T_f(r)<T_g(r)<b T_f(r)$  for $r$ sufficiently large.

Our main purpose in this paper is to generalize Theorem \ref{simple} by substituting small growth functions for the constant coefficients. This not only gives us a generalization of the quotient problem  for recurrence sequences \cite{corvaja2002finiteness}, it also gives new approaches to the study of exponential polynomials started by Ritt \cite{ritt1929zeros}.  

Before stating our main result, we introduce the following notations.
For entire functions $g_1,\dots,g_m$, let ${\bf g}=[1:g_1:\hdots:g_m]$ be a holomorphic map from $\CC$ to  $\PP^{m-1}$. We say a meromorphic function $a$ is of {\it slow growth} with respect to ${\mathbf g}$ if $T_{a}(r)=o(T_{\mathbf g}(r))$.
Let $K_{\mathbf g}:=\{ a | a \text{ is a meromorphic function and }T_{a}(r)=o(T_{\mathbf g}(r))\}$.  By the basic properties of characteristic functions,  $K_{\mathbf g}$ forms a field.  Let $R_{\mathbf g}\subset K_{\mathbf g}$ be the subring consisting of all entire functions in $K_{\mathbf g}$.

\begin{thm}\label{maintheorem}
    Let $l,m$ be two positive integers. Let $f_1,\dots,f_l$ and $g_1,\dots,g_m$ be nonconstant entire functions such that $\max_{1\leq i\leq l} T_{f_i}(r)\asymp \max_{1\leq j\leq m} T_{g_j}(r)$, and let $a_0\in R_{\mathbf{g}}$ and $a_1,\dots,a_l,b_0,\dots,b_m\in R_{\mathbf{g}}\setminus\{0\}$. Denote 
    $$
        F(n)=a_0+a_1f_1^n+\cdots+a_lf_l^n\quad \text{and}\quad G(n)=b_0+b_1g_1^n+\cdots+b_mg_m^n.
    $$
    \begin{enumerate}[(i)]
        \item  If  the ratio $F(n)/G(n)$ is an entire function for infinitely many $n\in \ZZ^+$, or 
        \item  $f_1,\dots,f_l$ and $g_1,\dots,g_m$ are all units, i.e. entire functions without zero, and if the ratio $F(1)/G(1)$ is an entire function, 
    \end{enumerate}
    then $f_1^{i_1} \cdots f_l^{i_l}g_1^{j_1}\dots g_m^{j_m}\in K_{\mathbf g}$ for some $(i_1,\dots,i_l,j_1,\dots,j_m)\ne(0,\dots,0) \in\mathbb Z^{l+m}$.
\end{thm}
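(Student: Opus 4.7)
The plan is to follow the strategy of \cite{guo2019quotient}, which proves Theorem \ref{simple} via Nevanlinna's truncated Second Main Theorem, but with each invocation of the classical (fixed-target) SMT replaced by a truncated Second Main Theorem for holomorphic curves against \emph{moving} hyperplanes whose coefficients lie in $K_{\mathbf g}$. Producing this truncated moving-target SMT is the principal new technical ingredient, and corresponds to the ``truncated version'' of the moving-target SMT mentioned in the abstract.

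Argue by contradiction: assume that $f_1^{i_1}\cdots f_l^{i_l}g_1^{j_1}\cdots g_m^{j_m}\notin K_{\mathbf g}$ for every nonzero $(i_1,\ldots,j_m)\in\ZZ^{l+m}$. This multiplicative independence modulo the small-function field is the moving-target replacement for the non-degeneracy condition in \cite{guo2019quotient}: it ensures that the holomorphic curve
$$
\mathbf h_n=[1:f_1^n:\cdots:f_l^n:g_1^n:\cdots:g_m^n]\colon\CC\to\PP^{l+m}
$$
is linearly non-degenerate over $K_{\mathbf g}$ for all sufficiently large $n$, and likewise for each of its coordinate projections. Introduce the two moving hyperplanes $H_F\colon a_0x_0+\sum_{i=1}^la_iy_i=0$ and $H_G\colon b_0x_0+\sum_{j=1}^mb_jz_j=0$, together with the coordinate hyperplanes $\{x_0=0\},\{y_i=0\},\{z_j=0\}$; all have coefficients in $K_{\mathbf g}$, and by non-degeneracy they sit in general position along $\mathbf h_n$.

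The hypothesis that $F(n)/G(n)$ is entire translates into a divisor inequality: the pull-back of $H_G$ to $\mathbf h_n$ is a sub-divisor of the pull-back of $H_F$, so $N(r,1/G(n))\le N(r,1/F(n))$. Applying the truncated moving-target SMT on the $\mathbf g$-side (with the moving hyperplane $H_G$ and the coordinate hyperplanes) gives a lower bound of the form $N^{(m)}(r,1/G(n))\ge (1-\varepsilon)\,n\,T_{\mathbf g}(r)$, while on the $\mathbf f$-side the First Main Theorem yields $N(r,1/F(n))\le T_{F(n)}(r)\le n\sum_iT_{f_i}(r)+o(nT_{\mathbf g}(r))$. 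Squeezing these bounds, using $\max_iT_{f_i}\asymp\max_jT_{g_j}$, and iterating the same argument after partitioning the indices of $F(n)$ and $G(n)$ into shorter subsums (which the truncation $N^{(m)}$ makes accessible without multiplicity loss), one eventually forces a vanishing $K_{\mathbf g}$-linear identity among the monomials in $f_1^n,\ldots,g_m^n$. A moving-target Borel-type identity then converts this vanishing into a multiplicative relation $f_1^{i_1}\cdots g_m^{j_m}\in K_{\mathbf g}$, contradicting the standing assumption. Case (ii) is handled identically at $n=1$: the unit hypothesis ensures that $\{y_i=0\}$ and $\{z_j=0\}$ are never met by $\mathbf h_1$, so the SMT simplifies and $n=1$ already provides enough growth to close the argument.

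The principal obstacle is establishing the truncated moving-target SMT itself. Existing moving-target second main theorems (Ru--Vojta, Ru--Wang) are formulated without truncation and bound only sums of proximity functions $\sum m(r,H_j)$; producing a truncated counterpart $\sum N^{(k)}(r,H_j)$ in the moving-target regime requires controlling the logarithmic derivatives of the slow-growth coefficients $a_i,b_j$ via the Lemma on Logarithmic Derivatives and absorbing them into an $o(T_{\mathbf g}(r))$ error, by means of a Wronskian-type argument adapted to the field $K_{\mathbf g}$. Once this analytic estimate is in place, the combinatorial passage from the counting inequalities to the vanishing sub-identity, and from there to the multiplicative relation, follows the template of \cite{guo2019quotient} with only routine modifications.
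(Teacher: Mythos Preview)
Your identification of the two technical pillars---a truncated moving-target Second Main Theorem and a moving-target Borel/Green lemma---is correct, and the paper does develop exactly these (its Theorem~\ref{movingsmt} and Lemmas~\ref{Borel1}--\ref{green1}). The gap is in how you \emph{apply} them.

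The direct squeeze you describe does not close. From the SMT on the $\mathbf g$-side you get at best $N^{(m)}(r,1/G(n))\ge (1-\varepsilon)\,nT_{\mathbf g}(r)-O(T_{\mathbf g}(r))$, while the First Main Theorem on the $\mathbf f$-side gives $N(r,1/F(n))\le n\sum_i T_{f_i}(r)+o(nT_{\mathbf g}(r))$. Combining these via $N^{(m)}(r,1/G(n))\le N(r,1/F(n))$ yields only $(1-\varepsilon)\le la+o(1)$ for the comparability constant $a$, which is no contradiction at all once $l\ge 1$. The loose phrase ``iterating after partitioning into shorter subsums'' does not repair this: the loss of the factor $l$ (and of the comparability constant) is structural, and no finite iteration of the same inequality recovers it. The whole difficulty of the Corvaja--Zannier method, which the paper adapts here, is precisely to overcome this mismatch of constants.

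What the paper actually does is an amplification in the spirit of \cite{corvaja2002finiteness}. One introduces two auxiliary integers $s,t$, writes $G_1(n)=G(n)-b_1g_1^n$, and uses the algebraic identity
\[
G_1(n)^s q(n)=F(n)\sum_{k=0}^{s-1}\binom{s}{k}G(n)^{s-1-k}(-b_1g_1^n)^k+(-b_1g_1^n)^s q(n),
\]
then multiplies by all monomials $\mathbf g^{n\mathbf c}$ with $|\mathbf c|\le t$ (in $g_2,\dots,g_m$ only). This produces a holomorphic map $\mathbf x$ into $\PP^{N-1}$ with $N$ depending on $s,t$, together with $M=\binom{m-1+t}{m-1}$ moving hyperplanes $H_{N+1},\dots,H_{N+M}$ whose defining forms evaluate on $\mathbf x$ to $(-b_1)^s x_i(n)g_1^{sn}$. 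Applying the truncated moving SMT to $\mathbf x$ against these hyperplanes plus the coordinate hyperplanes yields, after the counting-function estimate, an inequality of the shape
\[
\bigl(Msn-N_1aln-NQa(l+m)\bigr)\,T_{g_1}(r)\le_{\exc}\varepsilon\cdot C(s,t)\,n\,T_{g_1}(r),
\]
where $N_1=\binom{m-1+t+s}{m-1}$. The point is that for fixed $s>al$, both $Ms$ and $N_1al$ are polynomials in $t$ of degree $m-1$ with leading coefficients $s/(m-1)!$ and $al/(m-1)!$ respectively, so $Ms>N_1al$ for $t$ large; one then takes $\varepsilon$ small and obtains a uniform bound on $n$. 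This choice-of-parameters argument is the heart of the proof and is entirely absent from your sketch. Your curve $\mathbf h_n$ in $\PP^{l+m}$ with the single hyperplane $H_G$ corresponds to the trivial case $s=1$, $t=0$, $M=1$, which is exactly why the constants fail to close.
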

In particular, applying this theorem  to exponential polynomials (\cite{ritt1927factorization,ritt1929zeros}), we obtain the following corollary.

\begin{cor}\label{corollary}
Let $F$ and $G$ be two exponential polynomials written as
$$
F(z)=a_0+a_1e^{\lambda_1z}+\cdots+a_le^{\lambda_lz}\quad \text{and}\quad G(z)=b_0+b_1e^{\tau_1z}+\cdots+b_me^{\tau_mz},
$$
where $a_i,b_j$ are non-zero polynomials in $\CC[z]$ and $\lambda_i,\tau_j$ are in $\CC$.  If $F(z)/G(z)$ is an entire function, then
$\lambda_1,\dots,\lambda_l,\tau_1,\dots,\tau_m$ are linearly dependent over $\mathbb Q$.
\end{cor}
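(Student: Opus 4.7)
\textbf{Proof proposal for Corollary \ref{corollary}.}  The plan is to reduce the corollary to a direct application of Theorem \ref{maintheorem}(ii) with $f_i = e^{\lambda_i z}$, $g_j = e^{\tau_j z}$, and the polynomial coefficients $a_i, b_j$ playing the role of slow-growth functions. The preparatory step is a normalization: if some $\lambda_i$ equals $\lambda_{i'}$, or equals some $\tau_j$, or equals $0$, then a nontrivial $\mathbb{Q}$-linear relation is immediate; the analogous remark applies to the $\tau_j$'s. So I can assume that the $\lambda_i$'s are pairwise distinct and nonzero, and similarly for the $\tau_j$'s, and that moreover $\{\lambda_i\}\cap\{\tau_j\}=\emptyset$ (otherwise, again, we have a trivial relation).

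Next I verify the hypotheses of Theorem \ref{maintheorem}(ii). Each $e^{\lambda_i z}$ and $e^{\tau_j z}$ is a nonconstant entire function without zeros. Using the standard computation $T_{e^{\lambda z}}(r) = |\lambda|r/\pi$, both $\max_i T_{f_i}(r)$ and $\max_j T_{g_j}(r)$ are positive multiples of $r$, so $\max_i T_{f_i}(r)\asymp \max_j T_{g_j}(r)$. The nonzero polynomials $a_i, b_j$ lie in $R_{\mathbf g}$ because $T_{a}(r)=O(\log r)$ for a polynomial $a$, while $T_{\mathbf g}(r)$ grows like $\max_j |\tau_j|\,r/\pi$; thus $T_{a}(r)=o(T_{\mathbf g}(r))$. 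Finally, $F(1)/G(1)$ in the notation of Theorem \ref{maintheorem} coincides with $F(z)/G(z)$ of the corollary and is entire by assumption. Hence Theorem \ref{maintheorem}(ii) furnishes integers $(i_1,\dots,i_l,j_1,\dots,j_m)\ne(0,\dots,0)$ such that
$$
 e^{(i_1\lambda_1+\cdots+i_l\lambda_l+j_1\tau_1+\cdots+j_m\tau_m)z} \;=\; f_1^{i_1}\cdots f_l^{i_l}g_1^{j_1}\cdots g_m^{j_m}\;\in\; K_{\mathbf g}.
$$

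To conclude, set $\mu := i_1\lambda_1+\cdots+i_l\lambda_l+j_1\tau_1+\cdots+j_m\tau_m$. The membership $e^{\mu z}\in K_{\mathbf g}$ means $T_{e^{\mu z}}(r) = |\mu|r/\pi = o(T_{\mathbf g}(r)) = o(r)$, which forces $\mu=0$. This is precisely a nontrivial $\mathbb{Q}$-linear dependence among $\lambda_1,\dots,\lambda_l,\tau_1,\dots,\tau_m$, as claimed.

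I do not anticipate any serious obstacle: the main theorem does all the heavy lifting, and the only delicate point is the bookkeeping needed to reduce to the case of distinct nonzero exponents so that Theorem \ref{maintheorem} applies cleanly and so that the conclusion $e^{\mu z}\in K_{\mathbf g}$ actually forces $\mu=0$ (rather than being vacuous in degenerate situations).
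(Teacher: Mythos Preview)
Your proposal is correct and follows essentially the same route as the paper's proof: apply Theorem~\ref{maintheorem} to $f_i=e^{\lambda_i z}$, $g_j=e^{\tau_j z}$ with polynomial coefficients in $R_{\mathbf g}$, obtain $e^{\mu z}\in K_{\mathbf g}$ for a nontrivial integer combination $\mu$, and then use the linear growth of $T_{e^{\mu z}}(r)$ to force $\mu=0$. Your added normalization step (disposing of repeated or zero exponents before invoking the theorem) is not in the paper's terse argument but is a sensible precaution, since Theorem~\ref{maintheorem} requires the $f_i,g_j$ to be nonconstant.
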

\begin{rmk}
Ritt \cite{ritt1927factorization}  showed that {\it if $G(z)$ divides $F(z)$ (with $a_0=b_0=1$) in the ring of exponential polynomials with constant coefficients, then $\tau_1,\dots,\tau_m$ is a $\mathbb Q$-linear combination of $\lambda_1,\dots,\lambda_l$}. Then Everest and van der Poorten \cite{everest1997factorisation} generalized this to polynomial coefficients. Although the corollary is weaker than Ritt's result, we propose a new approach to solve this problem. Moreover, Ritt \cite{ritt1929zeros}, Lax \cite{lax1948quotient}, Rahman \cite{rahman1960quotient}, and Shields \cite{shields1963quotients} successively continued to study the quotient of two exponential polynomials and finally found that if it is an entire function, then the quotient would also be an exponential polynomial divided by a polynomial, similar to the conclusion obtained by Corvaja and Zannier \cite{corvaja2002finiteness}. 
\end{rmk} 

We give an brief proof of Corollary \ref{corollary} under the assumption that Theorem \ref{maintheorem} holds. 
\begin{proof}[Proof of Corollary \ref{corollary}]

Assume that $F(z)/G(z)$ is an entire function. Then our theorem implies that $$
Q(z):=\exp((i_1\lambda_1+\cdots+i_l\lambda_l+j_1\tau_1+\cdots+j_m\tau_m)z)\in K_{\mathbf{g}}
$$
for some non-trivial integers $i_1,\dots,i_l,j_1,\dots,j_m$. 
Notice that 
$$
T_{Q(z)}(r)=|i_1\lambda_1+\cdots+i_l\lambda_l+j_1\tau_1+\cdots+j_m\tau_m|\cdot r+O(1)
$$
which is not in $K_\mathbf{g}$ unless $i_1\lambda_1+\cdots+i_l\lambda_l+j_1\tau_1+\cdots+j_m\tau_m=0$.
\end{proof}

The work of Corvaja and Zannier on linear recurrence sequences in \cite{corvaja2002finiteness} and Vojta's dictionary between diophantine geometry and Nevanlinna theory (\cite{vojta2011diophantine, ru2001nevanlinna}) inspired us to derive Theorem \ref{simple} \cite{guo2019quotient}. To consider the case in which the constants $a_i,b_j$ are replaced with small growth functions with respect to $\mathbf{g}$, we first need to add a ramification term to the second main theorem for moving targets \cite{ru1991second} and derive a moving target version of Borel's lemma and Green's theorem. We can then adapt the proof in \cite{guo2019quotient} to Theorem \ref{maintheorem} and also use the ramification term to produce a truncated version and to reach a contradiction.

\section{Preliminary}
\label{secpre}
Now let us recall some notations, definitions and some basic results in Nevanlinna theory. Refer to \cite{lang1987introduction} or \cite{ru2001nevanlinna} for details.

Let $f$ be a meromorphic function  and   $z\in \CC$ be a complex number. Denote $v_z(f):=\ord_z(f)$,
$$v_z^+ (f):=\max\{0,v_z(f)\}, \quad\text{and }\quad  v_z^- (f):=-\min\{0,v_z(f)\}.$$
Let $n_f(\infty,r)$ denote the number of poles of $f$ in $\{z:|z|\le r\}$, counting multiplicity. The  {\it counting function} of $f$ at $\infty$ is defined by
\begin{align*}
N_f(\infty,r)&:=\int_0^r\frac{n_f(\infty,t)-n_f(\infty,0)}t dt+n_f(\infty,0)\log r\\
&=\sum_{0<|z|\le r } v_z^- (f)\log |\frac{r}{z}|+v_0^- (f)\log r.
\end{align*}
Then the {\it counting function} $N_f(a,r)$ for $a\in\CC$ is defined as
$$
N_f(a,r):=N_{1/(f-a)}( \infty,r).
$$
The  {\it proximity function} $m_f(\infty,r)$ is defined by
$$
m_f(\infty,r):=\int_0^{2\pi}\log^+|f(re^{i\theta})|\frac{d\theta}{2\pi},
$$
where $\log^+x=\max\{0,\log x\}$ for  $x\ge 0$. For any $a\in \CC,$ the {\it proximity function} $m_f(a,r)$ is defined by
$$m_f(a,r):=
m_{1/(f-a)}(\infty,r).
$$
The {\it characteristic function} is defined by
$$
T_f(r):=m_f(\infty,r)+N_f(\infty,r).
$$
It satisfies the inequalities $T_{fg}(r)\leq T_f(r)+T_g(r)+O(1)$ and $T_{f+g}(r)\leq T_f(r)+T_g(r)+O(1)$ for any entire functions $f$ and $g$. It also satisfies the First Main Theorem as follows.
\begin{thm}\label{Cfirstmain}  
    Let $f$ be a non-constant  meromorphic function on $\CC$.  Then for every $a\in\CC$ and for any positive real number $r$, $$m_f(a,r)+N_f(a,r)=T_f(r)+O(1),$$ where $O(1)$ is independent of $r$.
\end{thm}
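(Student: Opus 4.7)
The plan is to derive the First Main Theorem from Jensen's formula, first for the target $a=0$ and then reduce the general case to that one by a change of variable $f \mapsto f-a$. This is the standard route and should fit naturally with the notation already set up in Section~\ref{secpre}.

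First I would write down Jensen's formula, which for a meromorphic $f$ on $\CC$ with $f(0)\neq 0,\infty$ says
$$
\int_0^{2\pi}\log|f(re^{i\theta})|\,\frac{d\theta}{2\pi} \;=\; N_f(0,r) - N_f(\infty,r) + \log|f(0)|,
$$
where I read off the right-hand side by summing contributions from the zeros and poles inside the disc of radius $r$ (the case $f(0)\in\{0,\infty\}$ introduces the usual $\log r$ corrections that the paper has already absorbed into the definitions of $N_f(\infty,r)$ and $N_f(0,r)$, so this identity extends to all $r>0$ up to a bounded term). Next I would split $\log|f|=\log^+|f|-\log^+|1/f|$, which turns the left-hand side into $m_f(\infty,r)-m_f(0,r)$. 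Rearranging gives
$$
m_f(\infty,r)+N_f(\infty,r) \;=\; m_f(0,r)+N_f(0,r) + \log|f(0)|,
$$
that is, $T_f(r)=m_f(0,r)+N_f(0,r)+O(1)$, which is the theorem at the target $a=0$.

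For a general $a\in\CC$, I would apply the identity just proved to the function $f-a$ in place of $f$. The key point is that $m_{f-a}(0,r)=m_f(a,r)$ and $N_{f-a}(0,r)=N_f(a,r)$ straight from the definitions, so one obtains $T_{f-a}(r)=m_f(a,r)+N_f(a,r)+O(1)$. It then remains to see that $T_{f-a}(r)=T_f(r)+O(1)$. This is immediate from the inequality $T_{f+g}(r)\leq T_f(r)+T_g(r)+O(1)$ already recorded in the excerpt, applied once with $g=-a$ (a constant, so $T_g=O(1)$) and once with the roles of $f$ and $f-a$ reversed.

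The main obstacle, and really the only nontrivial input, is Jensen's formula itself. I expect to justify it by the usual contour-integration / harmonic-function argument: one removes small discs around the zeros and poles, applies the mean value property to the harmonic function $\log|f|$ on the punctured disc, and then lets the small radii tend to zero while tracking the logarithmic singularities. Once this is accepted, the remainder is purely algebraic manipulation of definitions and the standard properties of $T_f$ stated just before the theorem.
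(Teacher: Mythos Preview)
Your proposal is correct and follows precisely the route the paper indicates: the paper does not give a detailed proof but simply remarks that Theorem~\ref{Cfirstmain} ``can be deduced from the following version of Jensen's formula'' (Theorem~\ref{Jensen}), and your argument supplies exactly that deduction in the standard way. One minor caveat: the subadditivity $T_{f+g}\le T_f+T_g+O(1)$ is stated in the excerpt only for entire functions, whereas you need it for meromorphic $f$; you might instead observe directly that $N_{f-a}(\infty,r)=N_f(\infty,r)$ and $|m_{f-a}(\infty,r)-m_f(\infty,r)|\le \log^+|a|+\log 2$, which gives $T_{f-a}(r)=T_f(r)+O(1)$ without appeal to that inequality.
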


The above theorem can be deduced from the following version of Jensen's formula.  
\begin{thm}\label{Jensen}
Let $f$ be a meromorphic function on $\{z: |z|\le r\}$ which is not the zero function.  Then
\begin{align*}
\int_0^{2\pi}\log|f(re^{i\theta})|\frac{d\theta}{2\pi}
&=N_f(r,0)- N_f(r,\infty)+\log |c_f|,
\end{align*}
where $c_f$ is the leading coefficient of $f$ expanded as the Laurent series in $z$, i.e.,
$f=c_fz^m+\cdots $ with $c_f\ne 0$.
\end{thm}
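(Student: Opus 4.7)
The plan is to reduce the identity to the mean value property of harmonic functions by stripping off the zeros and poles of $f$ inside $\{|z|\le r\}$ using M\"obius-type factors whose modulus equals $1$ on the circle $|z|=r$. Any zeros or poles of $f$ lying exactly on the circle $|z|=r$ are handled by a limiting argument: prove the formula on $\{|z|\le r(1+\varepsilon)\}$ and let $\varepsilon\to 0^+$, using the fact that $\log|z-a|$ is integrable on the circle so the relevant integrals pass to the limit by dominated convergence.

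First I would isolate the behavior at the origin by writing $f(z)=c_f z^m\tilde f(z)$ with $m:=v_0(f)\in\ZZ$ and $\tilde f(0)=1$; the contribution of $c_f z^m$ to the left-hand side is $\log|c_f|+m\log r$, which matches exactly the $v_0^\pm(f)\log r$ correction built into the definition of $N_f$. Next, let $a_1,\dots,a_p$ and $b_1,\dots,b_q$ enumerate the zeros and poles of $\tilde f$ in $0<|z|<r$ with multiplicity, and for each such point $w$ introduce the factor
\[
\phi_w(z):=\frac{r(z-w)}{r^2-\bar w z},
\]
which is holomorphic and non-vanishing on $\{|z|\le r\}$ except for a simple zero at $w$, and satisfies $|\phi_w(z)|=1$ on $|z|=r$. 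The function
\[
g(z):=\tilde f(z)\,\prod_{k=1}^{q}\phi_{b_k}(z)\,\Big/\,\prod_{k=1}^{p}\phi_{a_k}(z)
\]
is then holomorphic and zero-free on the closed disk, so $\log|g|$ is harmonic there and the mean value property gives $\int_0^{2\pi}\log|g(re^{i\theta})|\,d\theta/(2\pi)=\log|g(0)|$.

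To assemble the identity, observe that $|\phi_w|\equiv 1$ on $|z|=r$ removes the auxiliary factors from the boundary integral, while at the origin $\phi_w(0)=-w/r$ gives $\log|\phi_w(0)|=-\log(r/|w|)$; summing over zeros and poles and using $\sum_{k}\log(r/|a_k|)=N_{\tilde f}(r,0)$ together with its analogue for poles yields
\[
\int_0^{2\pi}\log|\tilde f(re^{i\theta})|\,\frac{d\theta}{2\pi}=N_{\tilde f}(r,0)-N_{\tilde f}(r,\infty).
\]
Adding the contribution $\log|c_f|+m\log r$ from the factor $c_fz^m$ and reconciling $m\log r$ with the $v_0^\pm(f)\log r$ terms absorbed by the definition of $N_f$ produces the claimed formula. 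The main obstacle is the technical but routine treatment of zeros or poles exactly on the circle $|z|=r$, dispatched by the $\varepsilon$-perturbation in the first paragraph; once that is handled, the proof is a clean combination of the harmonicity of $\log|g|$ and the unimodularity of $\phi_w$ on the boundary.
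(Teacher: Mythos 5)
Your argument is correct: it is the classical proof of Jensen's formula via Blaschke-type factors and the mean value property of the harmonic function $\log|g|$. The paper itself offers no proof of this theorem --- it is quoted as a standard fact from the references (Lang, Ru) --- so there is nothing to compare against except the textbook argument, which is exactly what you give. Your bookkeeping is right: $|\phi_w|\equiv 1$ on $|z|=r$ strips the auxiliary factors from the boundary integral, $\log|\phi_w(0)|=-\log(r/|w|)$ reproduces the counting functions, and the $m\log r$ from $c_fz^m$ is exactly the $v_0^+(f)\log r - v_0^-(f)\log r$ term built into the paper's definition of $N_f$. The only loosely phrased step is the treatment of zeros or poles on $|z|=r$: "prove the formula on $\{|z|\le r(1+\varepsilon)\}$" should be read as "establish the identity at radius $\rho$ for $\rho$ near $r$ avoiding the exceptional circles, then let $\rho\to r$," noting that the boundary integral converges by the local integrability of $\log|z-a|$ and that the counting functions are continuous in the radius since a zero or pole on $|z|=r$ contributes $\log(r/r)=0$. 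With that reading the proof is complete.
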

 
For a  holomorphic map ${\mathbf f}: \CC \rightarrow \PP^n(\CC)$, we take a reduced form of ${\mathbf f}=[f_0:\hdots:f_n]$, i.e. $f_0,\dots, f_n$ are  entire functions on $\CC$ without common zero. The Nevanlinna-Cartan {\it characteristic function}  $T_{\mathbf f}(r)$ is defined by
$$T_{\mathbf f}(r) =   \int_0^{2\pi} \log\|{\mathbf f}(re^{i\theta})\|\frac{d\theta}{2\pi}+O(1),$$
where $\|{\mathbf f}(z)\| = \max\{|f_0(z)|,\dots ,|f_n(z)|\}$.
This definition is independent, up to an additive constant, of the choice of the reduced representation of ${\mathbf f}$. Generally, if ${\mathbf f}=[f_0:\hdots:f_n]$ is not a reduced form, we define the height of $f$ as
\begin{equation*}
    T_{\mathbf f}(r) = \int_0^{2\pi} \log\|{\mathbf f}(re^{i\theta})\|\frac{d\theta}{2\pi}-\max_{i}\sum_{|z|\le r}\ord_{z}f_i\log\left|\frac{r}{z}\right|+O(1)
\end{equation*}
From the definition of the characteristic function, we derive the following proposition.
\begin{prop}[{\cite[Theorem~A3.1.2]{ru2001nevanlinna}}]
    \label{uplow}
    Let ${\mathbf f}=[f_0:\hdots:f_n]:\CC\to \PP^n(\CC)$ be  holomorphic curve, where $f_0,\dots,f_n$ are entire functions without common zero.  Then  
    \begin{equation}
        T_{f_j/f_i}(r)+O(1)\leq T_{\mathbf f}(r)\leq \sum_{j=0}^nT_{f_j/f_0}(r)+O(1).
    \end{equation}
\end{prop}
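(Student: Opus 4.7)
The plan is to work directly from the integral definition $T_{\mathbf{f}}(r)=\int_0^{2\pi}\log\|\mathbf{f}(re^{i\theta})\|\,d\theta/(2\pi)+O(1)$, with $\|\mathbf{f}\|=\max_i|f_i|$ in the reduced form, and for each inequality to express $\log\|\mathbf{f}\|$ pointwise in terms of $\log|f_i|$ and $\log^+|f_j/f_i|$. Jensen's formula (Theorem \ref{Jensen}) will then convert the $\int\log|f_i|\,d\theta/(2\pi)$ pieces into counting functions $N_{f_i}(0,r)$, since each $f_i$ is entire, and the hypothesis that $f_0,\dots,f_n$ have no common zero will let me match those counting functions against the $N_{f_j/f_i}(\infty,r)$.

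For the upper bound, I would factor out $|f_0|$ pointwise to get
$$\log\|\mathbf{f}\|\le\log|f_0|+\log\max\bigl(1,|f_1/f_0|,\dots,|f_n/f_0|\bigr)\le\log|f_0|+\sum_{j=1}^{n}\log^{+}|f_j/f_0|.$$
Integrating in $\theta$, Jensen's formula turns $\int\log|f_0|\,d\theta/(2\pi)$ into $N_{f_0}(0,r)+O(1)$, and the second sum integrates to $\sum_{j=1}^{n}m_{f_j/f_0}(\infty,r)$. At each zero $z$ of $f_0$, the no-common-zero assumption produces an index $j$ with $f_j(z)\ne 0$, so $v_z^{-}(f_j/f_0)=v_z(f_0)$ for that $j$; summing over $z$ gives $N_{f_0}(0,r)\le\sum_{j=1}^{n}N_{f_j/f_0}(\infty,r)$. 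Combining these estimates yields $T_{\mathbf{f}}(r)\le\sum_{j=1}^{n}T_{f_j/f_0}(r)+O(1)$, which is the stated inequality (the $j=0$ term contributes zero).

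For the lower bound I would use the trivial pointwise bound $|f_j/f_i|\le\|\mathbf{f}\|/|f_i|$, which is at least $1$, so $\log^{+}|f_j/f_i|\le\log\|\mathbf{f}\|-\log|f_i|$. Integrating and applying Jensen to the entire function $f_i$ gives $m_{f_j/f_i}(\infty,r)\le T_{\mathbf{f}}(r)-N_{f_i}(0,r)+O(1)$. Separately, any pole of $f_j/f_i$ arises from a zero of $f_i$ of no greater order, so $N_{f_j/f_i}(\infty,r)\le N_{f_i}(0,r)$. Adding the two lines makes the $N_{f_i}(0,r)$ contributions cancel and gives $T_{f_j/f_i}(r)\le T_{\mathbf{f}}(r)+O(1)$.

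The main obstacle is the bookkeeping in the upper bound: one must invoke the no-common-zero hypothesis at precisely the right point, namely to convert the Jensen byproduct $N_{f_0}(0,r)$ into a sum of pole-counting functions $N_{f_j/f_0}(\infty,r)$, and one must be careful with the $\log^{+}$ versus $\log$ distinction near the zeros of $f_0$, where the quantity $|f_j/f_0|$ can blow up for one index while being negligible for others. Once those ordering-of-vanishing issues are handled correctly, the remainder of the argument is a direct application of Jensen's formula and the definitions of $m$, $N$, and $T$.
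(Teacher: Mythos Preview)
The paper does not give its own proof of this proposition; it is quoted directly from Ru's textbook \cite{ru2001nevanlinna} and used as a black box. Your argument is correct and is essentially the standard proof one finds in the literature: factor out $|f_0|$ (or $|f_i|$) pointwise, use Jensen's formula to convert $\int\log|f_i|\,d\theta/2\pi$ into $N_{f_i}(0,r)+O(1)$, and invoke the no-common-zero hypothesis to compare $N_{f_0}(0,r)$ with $\sum_j N_{f_j/f_0}(\infty,r)$. Both inequalities go through exactly as you describe, and the bookkeeping concern you flag (matching each zero of $f_0$ to a pole of some $f_j/f_0$) is handled correctly by choosing, for each such zero, an index $j$ with $f_j(z)\ne 0$.
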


Let $H$ be a hyperplane in $\PP^n(\CC)(n>0)$ and let $a_0X_0+\cdots+a_nX_n$ be a linear form defining it. Let $P=[x_0:\hdots:x_n]\in\PP^n(\CC)\setminus H$ be a point. The Weil function $\lambda_H:\PP^n(\CC)\setminus H\to \RR$ is defined as
\begin{align}
    \lambda_H(P)=-\log \frac{|a_0x_0+\cdots+a_nx_n|}{\max\{|x_0|,\dots ,|x_n|\}}.
\end{align}
This definition depends on $a_0,\dots,a_n$, but only up to an additive constant and it is independent of the choice of homogeneous coordinates for $P$. 
The {\it proximity function} of ${\mathbf f}$ with respect to $H$ is defined by
$$ 
m_{\mathbf f}(H,r) =  \int_0^{2\pi} \lambda_{H}( {\mathbf f}(re^{i\theta}) ) \frac{d\theta}{2\pi}.
$$
Let ${\bf n}_{\mathbf f}(H,r)$ (respectively, ${\bf n}_{\mathbf f}^{(Q)}(H,r)$) be the number of zeros of $a_0f_0+\cdots+a_nf_n$ in the disk $|z|\le r$, counting multiplicity (respectively, ignoring multiplicity bigger than $Q\in\NN$).  The integrated counting function with respect to $H$ is  defined by
$$
N_{\mathbf f}(H,r) = \int_0^r \dfrac{{\bf n}_{\mathbf f}(H,t) - {\bf n}_{\mathbf f}(H,0)}{t} dt + {\bf n}_{\mathbf f}(H,0)\log r, 
$$ and the $Q$-truncated counting function with respect to $H$ is defined by 
$$
N_{\mathbf f}^{(Q)}(H,r) = \int_0^r \dfrac{{\bf n}_{\mathbf f}^{(Q)}(H,t) - {\bf n}_{\mathbf f}^{(Q)}(H,0)}{t} dt + {\bf n}_{\mathbf f}^{(Q)}(H,0)\log r. 
$$

The First Main Theorem also holds for hyperplanes. 
\begin{thm}\label{FMT}
Let  ${\mathbf f}: \CC \rightarrow \PP^n(\CC)$  be a holomorphic map and let $H$ be a hyperplane in $ \PP^n(\CC)$.  If $f(\CC)\not\subset H$, then for $r>0$,
\begin{align*} 
   T_{\mathbf f}(r) =m_{\mathbf f}(H,r)+N_{\mathbf f}(H,r)+O(1),
\end{align*}
where $O(1)$ is bounded independently of $r$.
\end{thm}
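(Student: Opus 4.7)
The plan is to reduce Theorem \ref{FMT} to an application of Jensen's formula (Theorem \ref{Jensen}), in direct analogy with how the scalar First Main Theorem (Theorem \ref{Cfirstmain}) is derived. First I would fix a reduced representation $\mathbf{f} = [f_0 : \cdots : f_n]$, so that $f_0, \ldots, f_n$ are entire functions with no common zeros, and write the hyperplane $H$ as the vanishing locus of a linear form $L(X_0, \ldots, X_n) = a_0 X_0 + \cdots + a_n X_n$. Then I set $\ell(z) := L(\mathbf{f}(z)) = a_0 f_0(z) + \cdots + a_n f_n(z)$, which is an entire function. The assumption $\mathbf{f}(\CC) \not\subset H$ guarantees $\ell \not\equiv 0$, which is exactly what is needed to apply Jensen.

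Next I would unwind the definitions. By definition of the Weil function $\lambda_H$, we have
$$\lambda_H(\mathbf{f}(z)) = \log \|\mathbf{f}(z)\| - \log |\ell(z)|$$
up to an additive constant depending only on the coefficients $a_i$. Integrating this identity around the circle $|z| = r$ and invoking the definition of $T_{\mathbf{f}}(r)$ for a reduced representation gives
$$m_{\mathbf{f}}(H, r) = T_{\mathbf{f}}(r) - \int_0^{2\pi} \log |\ell(re^{i\theta})|\, \frac{d\theta}{2\pi} + O(1).$$
This rewrites the theorem as a claim about the remaining integral.

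The key step is to evaluate that integral by Jensen's formula. Since $\ell$ is entire, it has no poles, so $N_\ell(\infty, r) = 0$, and Theorem \ref{Jensen} yields
$$\int_0^{2\pi} \log |\ell(re^{i\theta})|\, \frac{d\theta}{2\pi} = N_\ell(0, r) + \log |c_\ell|.$$
Because $f_0, \ldots, f_n$ have no common zero, the zero divisor of $\ell$ coincides (with multiplicities) with the pull-back of $H$ under $\mathbf{f}$, so $N_\ell(0, r) = N_{\mathbf{f}}(H, r)$. Substituting back gives $T_{\mathbf{f}}(r) = m_{\mathbf{f}}(H, r) + N_{\mathbf{f}}(H, r) + O(1)$, as desired.

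The main point to be careful about, rather than a serious obstacle, is to verify that both the ambiguity in the Weil function (coming from rescaling $L$) and the Jensen correction $\log|c_\ell|$ are absorbed into the $O(1)$, which is independent of $r$. One must also invoke the reduced-form hypothesis explicitly when identifying $N_\ell(0,r)$ with $N_{\mathbf{f}}(H,r)$; without it, a common-factor correction would appear and would need to be subtracted, consistent with the extended definition of $T_{\mathbf{f}}(r)$ recorded just after the definition of the Nevanlinna--Cartan characteristic.
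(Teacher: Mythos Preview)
The paper does not supply its own proof of Theorem \ref{FMT}; it is recorded in Section \ref{secpre} as a standard preliminary result, with the reader referred to \cite{lang1987introduction} or \cite{ru2001nevanlinna} for details. Your argument is correct and is precisely the standard derivation: unwind the definitions of $m_{\mathbf f}(H,r)$ and $T_{\mathbf f}(r)$ in a reduced representation, then apply Jensen's formula (Theorem \ref{Jensen}) to the entire function $\ell=L(\mathbf f)$.

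One minor remark on bookkeeping: with the paper's conventions, $N_{\mathbf f}(H,r)$ is \emph{defined} as the integrated counting function of zeros of $a_0f_0+\cdots+a_nf_n$, so the identification $N_\ell(0,r)=N_{\mathbf f}(H,r)$ is tautological rather than something that requires the reduced-form hypothesis. Where the reduced form actually enters is in the identity $T_{\mathbf f}(r)=\int_0^{2\pi}\log\|\mathbf f(re^{i\theta})\|\,\frac{d\theta}{2\pi}+O(1)$, which would otherwise carry the correction term recorded just after the definition of $T_{\mathbf f}$. This does not affect the validity of your proof.
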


The following general second main theorem with ramification term is due to Vojta.
\begin{thm}[{\cite[Theorem 1]{vojta1997cartan}}]
\label{gsmt}
    Let ${\mathbf f}:\mathbb{C}\to\mathbb{P}^n(\mathbb{C})$ be a holomorphic curve whose image is not contained in any proper subspaces and let $[f_0:\hdots:f_n]$ be a reduced form of ${\mathbf f}$. Let $H_1,\dots,H_q$ be arbitrary hyperplanes in $\mathbb{P}^n(\mathbb{C})$. Denote by $W({\mathbf f})$ the Wronskian of $f_0,\dots,f_n$. Then for any $\varepsilon>0$, we have
$$
\int_0^{2\pi} \max_K \sum_{k\in K}\lambda_{H_k}({\mathbf f}(re^{\sqrt{-1}\theta}))\frac{d\theta}{2\pi}+N_{W({\mathbf f})}(0,r)\leq_{\operatorname{exc}} (n+1+\varepsilon)T_{\mathbf f}(r),
$$  
where the maximum is taken over all subsets $K$ of $\{1,\dots, q\}$ such that $H_k$ ($k\in K$) are in general position and $\leq_{\operatorname{exc}}$ means the estimate holds except for $r$ in a set of finite Lebesgue measure.
\end{thm}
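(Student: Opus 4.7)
The plan is to follow Cartan's classical derivation of the Second Main Theorem via the Wronskian and the lemma on logarithmic derivatives, keeping track of the Wronskian's zero counting function (rather than dropping it) so that it appears on the left as the ramification term. The starting point is a pointwise reduction: at each $z=re^{\sqrt{-1}\theta}$ on the circle $|z|=r$, the inner maximum is attained by some subset $K(\theta)$. Writing $L_k := H_k({\mathbf f})$ and ordering the values $|L_k(z)|/\|{\mathbf f}(z)\|$ for $k\in K(\theta)$ in increasing order, I select the first $n+1$ indices $k_0,\ldots,k_n$ for which $L_{k_0},\ldots,L_{k_n}$ are linearly independent; this is possible because $\{H_k\}_{k\in K(\theta)}$ is in general position. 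The remaining terms of $\sum_{k\in K(\theta)}\lambda_{H_k}({\mathbf f}(z))$ are bounded, so up to an $O(1)$ error we may replace the sum by $\sum_{i=0}^n\lambda_{H_{k_i}}({\mathbf f}(z))$. Since there are only finitely many candidate tuples $(k_0,\ldots,k_n)$, the circle can be partitioned into finitely many measurable arcs on which the tuple is fixed.

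On each such arc I apply the Wronskian identity. Linear independence of $L_{k_0},\ldots,L_{k_n}$ gives $W(L_{k_0},\ldots,L_{k_n})=c_{\mathbf k}\cdot W({\mathbf f})$ for a nonzero constant $c_{\mathbf k}$, and factoring each column by $L_{k_i}$ yields
\begin{equation*}
\frac{W(L_{k_0},\ldots,L_{k_n})}{L_{k_0}\cdots L_{k_n}}=\det\!\left[\frac{L_{k_i}^{(j)}}{L_{k_i}}\right]_{0\le i,j\le n}.
\end{equation*}
Taking logarithms,
\begin{equation*}
\sum_{i=0}^n\lambda_{H_{k_i}}({\mathbf f}(z))=(n+1)\log\|{\mathbf f}(z)\|-\log|W({\mathbf f})(z)|+\log\left|\frac{W(L_{k_0},\ldots,L_{k_n})}{L_{k_0}\cdots L_{k_n}}\right|+O(1).
\end{equation*}
Integrating over $\theta$ on each arc and summing, the first term contributes $(n+1)T_{\mathbf f}(r)+O(1)$ by the definition of $T_{\mathbf f}$, while the second contributes $-N_{W({\mathbf f})}(0,r)+O(1)$ by Jensen's formula (Theorem \ref{Jensen}) applied to the entire function $W({\mathbf f})$, which has no poles because the $f_i$ are entire.

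The determinant term is controlled by the classical lemma on logarithmic derivatives: each entry $L_{k_i}^{(j)}/L_{k_i}$ expands as a polynomial in iterated logarithmic derivatives of $L_{k_i}$, and $m(L_{k_i}^{(j)}/L_{k_i},\infty,r) = o(T_{L_{k_i}}(r)) = o(T_{\mathbf f}(r))$ outside a set of finite Lebesgue measure. Since there are only finitely many tuples ${\mathbf k}$, a union bound lets us absorb all these estimates into a single $\varepsilon\,T_{\mathbf f}(r)$ outside an exceptional set. Combining everything gives
\begin{equation*}
\int_0^{2\pi}\max_K\sum_{k\in K}\lambda_{H_k}({\mathbf f}(re^{\sqrt{-1}\theta}))\frac{d\theta}{2\pi}+N_{W({\mathbf f})}(0,r)\leq_{\operatorname{exc}}(n+1+\varepsilon)T_{\mathbf f}(r),
\end{equation*}
as desired. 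The main obstacle I anticipate is the combinatorial/measurable bookkeeping of Step 1—showing that the argmax subset $K(\theta)$, and the subsequent choice of $n+1$ linearly independent forms, can be tracked piecewise so that integration works; and the uniform absorption of the constants $\log|c_{\mathbf k}|$ and of the finitely many logarithmic-derivative small-term estimates into a single $\varepsilon T_{\mathbf f}(r)$ bound.
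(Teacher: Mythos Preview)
The paper does not give its own proof of this theorem: it is quoted verbatim from Vojta's paper \cite{vojta1997cartan} and used as a black box in the proof of Theorem~\ref{movingsmt}. So there is no proof in the paper to compare your proposal against.

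That said, your outline is the standard Cartan--Vojta argument and is essentially correct. The reduction in Step~1 (keeping only $n+1$ linearly independent forms at each point) works because, with $\{H_k\}_{k\in K(\theta)}$ in general position, the $n+1$ forms $L_{k_0},\dots,L_{k_n}$ with smallest modulus already span, so the coordinate forms are bounded linear combinations of them; this forces $\|{\mathbf f}(z)\|\le C\max_i|L_{k_i}({\mathbf f}(z))|$, whence each remaining $\lambda_{H_k}({\mathbf f}(z))$ is bounded by a constant depending only on the finite collection of hyperplanes. The Wronskian identity, Jensen's formula for the entire function $W({\mathbf f})$, and the lemma on logarithmic derivatives then combine exactly as you describe. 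The only care needed is that the logarithmic-derivative estimate is applied to $\log^+$ of the determinant over each arc, which is dominated by the full-circle proximity function $m\big(\det[L_{k_i}^{(j)}/L_{k_i}],\infty,r\big)$; since there are only finitely many tuples ${\mathbf k}$, the union of exceptional sets remains of finite measure and the $O(\log T_{\mathbf f}(r))$ error terms are absorbed into $\varepsilon\,T_{\mathbf f}(r)$.
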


We also need the following inequality with truncated counting functions. 
\begin{lem}\rm{(}\cite[Lemma~A3.2.1]{ru2001nevanlinna}\rm{)}\label{counting}
    Let ${\mathbf f}=[f_0:\hdots:f_n]:\mathbb{C}\to\mathbb{P}^n(\mathbb{C})$ be a holomorphic curve whose image is not contained in any proper subspaces and  $ f_0,\dots,f_n $ are entire functions with no common zero.  Let $H_1,\dots,H_q$ be the hyperplanes in $\PP^n$  in general position. Then 
    \begin{equation}
        \sum_{j=1}^qN_{\mathbf f}(H_j,r)-N_{W(\bf f)}(0,r)\leq \sum_{j=1}^qN_{\mathbf f}^{(n)}(H_j,r).
    \end{equation}
\end{lem}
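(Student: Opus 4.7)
The plan is to reduce the inequality to a pointwise statement about orders of vanishing and then establish that statement by direct manipulation of the Wronskian matrix. Let $L_j$ denote the linear form cutting out $H_j$, so that $v_{z_0}(L_j(f_0,\dots,f_n))$ is the intersection multiplicity of ${\mathbf f}$ with $H_j$ at $z_0$. Unraveling the definitions shows that at each $z_0$ with $0<|z_0|\leq r$, the summand $N_{\mathbf f}(H_j,r)-N_{\mathbf f}^{(n)}(H_j,r)$ contributes exactly $\max\bigl(0,v_{z_0}(L_j({\mathbf f}))-n\bigr)\cdot\log(r/|z_0|)$ (with the usual adjustment at $z_0=0$). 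Hence the lemma follows from the pointwise statement
\begin{equation*}
    \sum_{j=1}^q \max\bigl(0,\,v_{z_0}(L_j({\mathbf f}))-n\bigr)\;\leq\;v_{z_0}(W({\mathbf f})) \qquad\text{for every } z_0\in\CC.
\end{equation*}

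Fix $z_0$. By general position, no $n+1$ of the $H_j$ share a common point of $\PP^n$, so the number of indices $j$ for which $L_j({\mathbf f})(z_0)=0$ is at most $n$; call these $L_{j_1},\dots,L_{j_s}$ with $s\leq n$, and set $v_i=v_{z_0}(L_{j_i}({\mathbf f}))$. Since any $\leq n+1$ forms in general position are linearly independent, so are $L_{j_1},\dots,L_{j_s}$, and I would extend them to a $\CC$-basis $L_{j_1},\dots,L_{j_s},M_1,\dots,M_{n+1-s}$ of linear forms on $\PP^n$ with each $M_k({\mathbf f})(z_0)\neq 0$ (possible because the forms vanishing at a point of $\PP^n$ form a proper subspace of the dual). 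The standard change-of-basis identity $W(L_{j_1}({\mathbf f}),\dots,L_{j_s}({\mathbf f}),M_1({\mathbf f}),\dots,M_{n+1-s}({\mathbf f}))=(\det A)\cdot W(f_0,\dots,f_n)$ for an invertible constant matrix $A$ then shows that the two Wronskians have the same order of vanishing at $z_0$.

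Now for any $i\leq s$ with $v_i>n$, the entry $(L_{j_i}({\mathbf f}))^{(k)}$ has order exactly $v_i-k\geq v_i-n>0$ at $z_0$ for every $0\leq k\leq n$, because $k<v_i$ throughout the row. I would factor $(z-z_0)^{v_i-n}$ out of row $i$ for each such $i$; the modified $(i,k)$-entry then has order $n-k\geq 0$, so what remains is a holomorphic matrix at $z_0$ whose determinant is a holomorphic function. Reading off the extracted factor yields
\begin{equation*}
    v_{z_0}(W({\mathbf f}))\;\geq\;\sum_{i:\,v_i>n}(v_i-n)\;=\;\sum_{j=1}^q\max\bigl(0,\,v_{z_0}(L_j({\mathbf f}))-n\bigr),
\end{equation*}
which is the pointwise bound needed.

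The only real obstacle is bookkeeping: checking that every entry of the factored row is holomorphic (which is where the strict inequality $v_i>n$ is used to guarantee $k<v_i$ for every $k$ in the row) and verifying the Wronskian change-of-basis identity carefully. No deeper analytic input is needed; the conceptual heart of the argument is the pigeonhole observation, supplied directly by general position, that at most $n$ forms can vanish at any given point, which is precisely what lets the row-factoring exhaust the excess $\sum\max(0,v_i-n)$ on the left side.
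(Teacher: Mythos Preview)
Your argument is correct and is precisely the standard proof one finds in the cited reference: reduce to the pointwise estimate $\sum_j\max(0,v_{z_0}(L_j(\mathbf f))-n)\le v_{z_0}(W(\mathbf f))$, use general position to bound the number of vanishing forms by $n$, change basis in the Wronskian, and factor $(z-z_0)^{v_i-n}$ out of each high-multiplicity row. The paper itself does not supply a proof of this lemma but simply cites \cite[Lemma~A3.2.1]{ru2001nevanlinna}, so there is nothing further to compare; one minor remark is that the requirement $M_k(\mathbf f)(z_0)\ne 0$ is harmless but unnecessary, since any extension to a basis already yields a holomorphic remaining matrix after the row extractions.
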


Finally, we recall the following generalized Borel's lemma in \cite{ru2004truncated}.

\begin{thm}[{\cite[Theorem 2.1]{ru2004truncated}}]\label{trunborel}
    Let $\mathbf{f}=[f_0:\hdots:f_n]:\CC\to\PP^n(\CC)$ be a holomorphic map with $f_0,\dots,f_n$ entire and no common zero. Assume that $f_{n+1}$ is a holomorphic function satisfying the equation $f_0+\dots+f_n+f_{n+1}=0$. If $\sum_{i\in I}f_i\ne 0$ for any proper subset $I\subset\{0,\dots,n+1\}$, then 
    \begin{equation*}
        T_{\mathbf{f}}(r)\leq_{\exc} \sum_{i=1}^{n+1} N_{f_j}^{(n)}(0,r)+O(\log^+T_{\ff}(r)).
    \end{equation*}
\end{thm}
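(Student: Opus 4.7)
The plan is to recast the hypothesis as a hyperplane configuration in $\PP^n(\CC)$ and apply Cartan's second main theorem. View $\mathbf{f}=[f_0:\cdots:f_n]:\CC\to\PP^n(\CC)$ and introduce the $n+2$ hyperplanes
\[
H_i:X_i=0\ (i=0,\ldots,n),\qquad H_{n+1}:X_0+\cdots+X_n=0.
\]
These lie in general position in $\PP^n(\CC)$, since any $n+1$ of the forms $X_0,\ldots,X_n,X_0+\cdots+X_n$ are linearly independent. The relation $f_0+\cdots+f_n+f_{n+1}=0$ identifies the pullback of the defining form of $H_{n+1}$ under $\mathbf{f}$ with $-f_{n+1}$, so that, using the no-common-zero hypothesis on $f_0,\ldots,f_n$,
\[
N_{\mathbf f}(H_j,r)=N_{f_j}(0,r)\quad\text{and}\quad N_{\mathbf f}^{(n)}(H_j,r)=N_{f_j}^{(n)}(0,r)\qquad (j=0,\ldots,n+1).
\]

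Assuming first that $\mathbf f$ is linearly non-degenerate, I would combine Theorem~\ref{gsmt} with Theorem~\ref{FMT} and Lemma~\ref{counting}. The First Main Theorem converts the proximity sum into $(n+2)T_{\mathbf f}(r)-\sum_{j}N_{\mathbf f}(H_j,r)+O(1)$; Theorem~\ref{gsmt} bounds the resulting expression by $(n+1+\varepsilon)T_{\mathbf f}(r)-N_{W(\mathbf f)}(0,r)$; and Lemma~\ref{counting} replaces $\sum_j N_{\mathbf f}(H_j,r)-N_{W(\mathbf f)}(0,r)$ by $\sum_j N_{\mathbf f}^{(n)}(H_j,r)$. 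With $q=n+2$ the coefficients align so that $T_{\mathbf f}(r)$ appears on the left. The remaining bookkeeping step is to upgrade the $\varepsilon T_{\mathbf f}(r)$ slack coming from Theorem~\ref{gsmt} to the sharper $O(\log^+ T_{\mathbf f}(r))$ error in the claim; this requires invoking the classical Cartan form of the SMT over $\CC$ with logarithmic error term in place of the $\varepsilon$-form stated in Theorem~\ref{gsmt}.

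The main obstacle is the linearly degenerate case, in which $\mathbf{f}(\CC)$ is contained in some proper linear subspace of $\PP^n(\CC)$. My strategy would be to factor $\mathbf{f}$ through a linearly non-degenerate map $\widetilde{\mathbf f}:\CC\to\PP^d(\CC)$ with $d<n$, determined by a maximal linearly independent subset of $\{f_0,\ldots,f_n\}$, and transfer the $n+2$ hyperplane conditions to $\PP^d$. The delicate point is ensuring that the induced linear forms remain in sufficiently general position on $\PP^d$ so that Cartan's SMT still applies at truncation level $n$; this is exactly where the hypothesis that no proper subsum $\sum_{i\in I}f_i$ vanishes becomes essential, as it rules out the coincidences among the $f_i$ that would otherwise destroy general position after projection. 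Once this reduction is carried out successfully, substituting back the identifications above yields
\[
T_{\mathbf f}(r)\leq_{\exc}\sum_{i=0}^{n+1}N_{f_i}^{(n)}(0,r)+O(\log^+ T_{\mathbf f}(r)),
\]
which is the claimed inequality (the index range on the right of the stated theorem being a minor typographical issue).
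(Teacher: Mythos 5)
First, a point of reference: the paper does not prove this statement at all --- it is quoted as Theorem~2.1 of Ru and Wang \cite{ru2004truncated} --- so your proposal must stand on its own. Your treatment of the linearly non-degenerate case is the standard argument and is essentially correct: the $n+2$ hyperplanes $X_0=0,\dots,X_n=0$, $X_0+\cdots+X_n=0$ are in general position in $\PP^n(\CC)$, the no-vanishing-subsum hypothesis guarantees $\mathbf f(\CC)\not\subset H_j$ for each $j$, and combining the First Main Theorem, Cartan's second main theorem in its classical form with $O(\log^+T_{\mathbf f}(r))$ error, and Lemma~\ref{counting} yields the claimed inequality with truncation at level $n$.

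The genuine gap is the degenerate case, which you correctly flag as the main obstacle but do not resolve; worse, the strategy you outline rests on a false claim. After factoring through a non-degenerate $\widetilde{\mathbf f}:\CC\to\PP^d(\CC)$ built from a maximal independent subset of the $f_i$, each $f_j$ becomes $L_j(\widetilde{\mathbf f})$ for a linear form $L_j$ on $\PP^d$, and to run the truncation step (Lemma~\ref{counting}) you need $L_0,\dots,L_{n+1}$ to be in general position on $\PP^d$. The hypothesis that no proper subsum $\sum_{i\in I}f_i$ vanishes does \emph{not} ensure this: it only forbids linear relations among the $f_i$ whose coefficients lie in $\{0,1\}$, whereas general position on $\PP^d$ is destroyed by any relation among $d+1$ or fewer of the $f_i$ with arbitrary coefficients. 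Concretely, take $f_0=e^z$, $f_1=e^{2z}$, $f_2=e^{3z}$, $f_3=f_0+f_1$, $f_4=-(2e^z+2e^{2z}+e^{3z})$; one checks that no proper subsum of $f_0+\cdots+f_4=0$ vanishes, yet $d=2$ and the three lines $L_0=X_0$, $L_1=X_1$, $L_3=X_0+X_1$ are concurrent in $\PP^2$, so the projected configuration is not in general position and Lemma~\ref{counting} does not apply to it. The actual proof in \cite{ru2004truncated} (in the tradition of Green and Fujimoto) handles degeneracy by a different mechanism: an induction on the number of terms in which a minimal nontrivial linear dependence is used to split the relation $f_0+\cdots+f_{n+1}=0$ into shorter relations, each inheriting the no-vanishing-subsum property, whose truncated estimates are then reassembled. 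As written, your argument establishes the theorem only under the extra hypothesis that $f_0,\dots,f_n$ are linearly independent over $\CC$.
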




\section{Nevanlinna theory with moving targets}
\subsection{A Second Main Theorem with Moving Targets}
We will reformulate the second main theorem with moving targets (\cite{ru1991second}) to suit our purpose.
Let ${\bf f}:=[f_0:\hdots:f_n]$ be a holomorphic map from $\CC$ to  $\PP^n$ where $f_0,f_1,\dots,f_n$ are  holomorphic functions without common zero.
For the entire functions $\gamma_0,\dots,\gamma_n$, we let
\begin{align}\label{linearform0}
L=\gamma_{0} X_0+\dots+\gamma_{n} X_n.
\end{align}    
Then it defines a (moving) hyperplane $H$ in $\PP^n(K)$, where the field $K$ contains $\gamma_j$ for all $0\leq j\leq n$.
We note that for each $z\in\CC$, $H(z)$ is the hyperplane determined by the linear form $L(z) =\gamma_{0}(z) X_0+\dots+\gamma_{n}(z) X_n$.
In our convention, a (moving) hyperplane $H$ in $\PP^n(K)$ is assumed to be associated with a linear form as in \eqref{linearform0}.

For $1\le j\le q$, let $\gamma_{j0}, \cdots,\gamma_{jn}$ be entire functions of small growth with respect to $\mathbf{f}$ and let $K_{\gamma}$ be the field generated by all $\gamma_{ji}$.  Let
\begin{align}\label{linearform}
L_j:=\gamma_{j0} X_0+\dots+\gamma_{jn} X_n.
\end{align} 
Then each $L_j$ defines a  hyperplane $H_j$ in $\PP^n(K_{\gamma})$.  
Moving hyperplanes $H_1,\cdots,H_q$ are said to be in general position if any choice of $n+1$  linear forms $L_{i_1},\cdots, L_{i_{n+1}}$ among $\{L_1,\cdots,L_q\}$ are linearly independent over $K_{\gamma}$, or equivalently if for any choice of $n+1$  linear forms $L_{i_1},\cdots, L_{i_{n+1}}$ among $\{L_1,\cdots,L_q\}$, there exists $z\in\CC$ such that $L_{i_1}(z),\cdots, L_{i_{n+1}}(z)$ are linearly independent over $\CC$.
For a moving hyperplane $H$ determined by the linear form $L=\gamma_{0} X_0+\dots+\gamma_{n} X_n$ with $\gamma_0,\dots,\gamma_n\in K_{\gamma}$,  the Weil function $\lambda_H$   is defined as
\begin{align}
    \lambda_{H(z)}(P)=-\log \frac{|\gamma_0(z)x_0+\cdots+\gamma_n(z)x_n|}{\max\{|x_0|,\dots ,|x_n|\}\max\{|\gamma_0(z)|,\dots ,|\gamma_n(z)|\}},
\end{align}
where $P=(x_0,\cdots,x_n)\in \PP^n(\CC)$ and $z\in\CC$. We note that this function is well-defined except in a set of zero Lebesgue measure and is independent of the choice of homogeneous coordinates for $P$. 
The {\it proximity function} of ${\mathbf f}$ with respect to $H$ is defined by
$$ 
m_{\mathbf f}(H,r) =  \int_0^{2\pi} \lambda_{H(re^{\sqrt{-1}\theta})}( {\mathbf f}(re^{\sqrt{-1}\theta}) ) \frac{d\theta}{2\pi}.
$$
The integral is also well-defined except in a set of zero Lebesgue measure . 
Let ${\bf n}_{\mathbf f}(H,r)$  be the number of zeros of $\gamma_0f_0+\cdots+\gamma_nf_n$ in the disk $|z|\le r$, counting multiplicity. The integrated counting function with respect to $H$ is  defined by
$$
N_{\mathbf f}(H,r) = \int_0^r \dfrac{{\bf n}_{\mathbf f}(H,t) - {\bf n}_{\mathbf f}(H,0)}{t} dt + {\bf n}_{\mathbf f}(H,0)\log r.
$$ 
Then the first main theorem for a moving hyperplane $H$ \cite{ru1991second} can be stated as
\begin{equation}\label{fmtmov}
    T_{\mathbf{f}}(r) =N_{\mathbf{f}}(H,r)+m_{\mathbf{f}}(H,r)+o (T_{\bf f}(r)).
\end{equation}

Let $t$ be a positive integer and $V(t)$ be the vector space generated over $\CC$ by
\begin{align}\label{Vt}
    \left\{ \prod_{j=1}^q\prod_{k=0}^n \gamma_{jk}^{n_{jk}} \middle|n_{jk}\ge 0,\,\sum_{j=1}^q\sum_{k=0}^n n_{jk}\le t \right\}.
\end{align}
Choose entire functions $h_1=1,h_2,\cdots,h_w$ to be a basis of $V(t+1)$ such that $h_1,h_2,\cdots,h_u$ ($u\le w$) form a basis of $V(t)$.
Moreover, we have (\cite{steinmetz1986verallgemeinerung} or \cite{wang1996effective})
\begin{align}\label{dimV}
\liminf_{t\to\infty}\dim V(t+1)/\dim V(t)=1. 
\end{align}
Now we state a general version of the main theorem with moving targets. 
\begin{thm}
\label{movingsmt}
    Let $\ff=[f_0:\hdots:f_n]:\mathbb{C}\to\mathbb{P}^n(\mathbb{C})$ be a holomorphic curve and $f_0,\dots,f_n$ be entire functions with no common zero.
    Let $H_j$ ($1\le j\le q$) be arbitrary (moving) hyperplanes in $\mathbb{P}^n(K_{\gamma})$ defined by linear forms $L_j$ as in (\ref{linearform}). The notations $u,w,h_1,\dots,h_w$ are given as above. Denote by $W$ the Wronskian of $\{h_mf_k\,|\, 1\le m\le w,\,  0\le k\le n\}$. Assume that  $f_0,\dots,f_n$ are linearly independent over $K_{\gamma}$.  
    \begin{enumerate}
    \item For any $\varepsilon>0$, we have the following inequality 
    $$\int_0^{2\pi} \max_J \sum_{j\in J}\lambda_{H_j(re^{\sqrt{-1}\theta})}(\ff(re^{\sqrt{-1}\theta}))\frac{d\theta}{2\pi}+\frac1{u}N_{W}(0,r)\leq_{\operatorname{exc}} (n+1+\varepsilon)T_{\ff}(r),$$  where the maximum is taken over all subsets $J$ of $\{1,\dots, q\}$ such that $H_j(re^{\sqrt{-1}\theta})$ $(j\in J)$ are in general position.
    \item \label{trun_count} If the moving hyperplanes $H_{j_1},\dots,H_{j_\ell}$  are in general position for almost all $z\in\CC$,  where $\{j_1,\dots,j_\ell\}$ is a subset of $\{1,\dots,q\}$, then there exists a positive integer $Q$ such that
    \begin{equation*}
        \sum_{t=1}^{\ell} N_{\mathbf{f}}(H_{j_t},r)-\frac{1}{u}N_W(0,r)\leq \sum_{t=1}^{\ell} N_{\mathbf{f}}^{(Q)}(H_{j_t},r)+o(T_{\mathbf{f}}(r)).
    \end{equation*}

    \end{enumerate}

\end{thm}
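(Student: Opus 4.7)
My plan is to reduce the moving-target setting to the fixed-target Second Main Theorem of Vojta (Theorem \ref{gsmt}) via the classical Steinmetz--Ru auxiliary construction, designed so that Vojta's Wronskian term transports verbatim into the statement. The central object is the auxiliary holomorphic curve
$$
\mathbf{F}=[h_1 f_0:h_1 f_1:\cdots:h_1 f_n:h_2 f_0:\cdots:h_w f_n]\colon\CC\to\PP^{w(n+1)-1}.
$$
Since $h_m\in V(t)$ and $\gamma_{jk}\in V(1)$, the product $h_m\gamma_{jk}$ lies in $V(t+1)$ for $1\le m\le u$, so it expands as $h_m\gamma_{jk}=\sum_{m'=1}^{w}c_{jk,mm'}h_{m'}$ with $c_{jk,mm'}\in\CC$. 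This converts each moving form $h_m L_j$ into a \emph{fixed} linear form $\tilde L_{jm}$ on $\PP^{w(n+1)-1}$ whose evaluation on $\mathbf{F}(z)$ equals $h_m(z)L_j(\mathbf{f}(z))$. The $K_\gamma$-linear independence of $f_0,\dots,f_n$ forces $\{h_m f_k\}$ to be $\CC$-linearly independent, so $\mathbf{F}$ is linearly nondegenerate in $\PP^{w(n+1)-1}$ and its Wronskian is precisely $W$.

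For part (1), I would apply Theorem \ref{gsmt} to $\mathbf{F}$ with the fixed hyperplanes $\{\tilde H_{jm}\}$. A direct Weil-function computation gives
$$
\lambda_{\tilde H_{jm}}(\mathbf{F}(z))=\lambda_{H_j(z)}(\mathbf{f}(z))+\varphi_{jm}(z),
$$
where the error $\varphi_{jm}$ depends only on the small-growth data $h_m,\gamma_{jk}$ and integrates to $o(T_{\ff}(r))$. Summing over $m=1,\dots,u$ reproduces $u\cdot\lambda_{H_j(z)}(\mathbf{f}(z))$ on the left, and the identity $\|\mathbf{F}\|=\|\mathbf{h}\|\cdot\|\mathbf{f}\|$ yields $T_{\mathbf{F}}(r)=T_{\ff}(r)+o(T_{\ff}(r))$. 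Plugging into Vojta's inequality, dividing by $u$, and invoking \eqref{dimV} to choose $t$ so large that $w/u$ is within $\varepsilon/(n+1)$ of $1$, delivers the bound with ramification term $\frac{1}{u}N_W(0,r)$ and coefficient $(n+1+\varepsilon)$ on the right.

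For part (2), I would apply Lemma \ref{counting} to $\mathbf{F}$ with the fixed forms $\tilde H_{j_t m}$ ($1\le t\le\ell,\,1\le m\le u$), which yields a truncation level $Q_0=w(n+1)-1$. From $\tilde L_{jm}(\mathbf{F}(z))=h_m(z)L_j(\mathbf{f}(z))$ one reads
$$
N_{\mathbf{F}}(\tilde H_{jm},r)=N_{\ff}(H_j,r)+N_{h_m}(0,r),
$$
together with $N_{\mathbf{F}}^{(Q_0)}(\tilde H_{jm},r)\le N_{\ff}^{(Q)}(H_j,r)+N_{h_m}(0,r)$ for a suitable $Q\ge Q_0$. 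Since $N_{h_m}(0,r)\le T_{h_m}(r)=o(T_{\ff}(r))$, summing over $m$ and dividing by $u$ recovers exactly the claimed inequality.

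The principal obstacle is arranging that the auxiliary collection $\{\tilde H_{jm}\}$ is in general position in $\PP^{w(n+1)-1}$ often enough to feed into Theorem \ref{gsmt} and Lemma \ref{counting}, while simultaneously controlling the Weil-function comparison modulo $o(T_{\ff}(r))$. The standard remedy is a Nochka-weight style selection: general position of $H_j$ over $K_\gamma$ specializes to $\CC$-general position of $\tilde H_{jm}$ at almost every $z$, and the asymptotic \eqref{dimV} ensures the numerical loss from the enlargement $V(t)\subset V(t+1)$ is negligible as $t\to\infty$.
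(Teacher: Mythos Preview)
Your proposal is correct and follows essentially the same route as the paper: build the auxiliary curve $\mathbf{F}=[h_mf_k]$ into $\PP^{w(n+1)-1}$, rewrite $h_mL_j$ as fixed linear forms $\tilde L_{jm}$, apply Vojta's Theorem~\ref{gsmt} (respectively Lemma~\ref{counting}) to $\mathbf{F}$, compare Weil and counting functions up to $o(T_{\ff}(r))$, and use $w/u\to1$ from \eqref{dimV}. One small clarification on your last paragraph: the general-position step needs no Nochka machinery and involves no ``almost every $z$'' --- the $\tilde H_{jm}$ are \emph{fixed} hyperplanes, and the paper dispatches the issue with a two-line linear algebra claim (if $\sum_{i,j}\alpha_{ij}\tilde L_{ij}=0$ over $\CC$, substitute $X_{k\nu}=h_\nu X_k$ to get $\sum_j(\sum_i\alpha_{ij}h_i)L_j=0$, whence independence of the $L_j$ over $K_\gamma$ and of the $h_i$ over $\CC$ force all $\alpha_{ij}=0$).
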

\begin{proof}
    By \eqref{fmtmov}, the first main theorem,  we may assume that $q\geq n+1$ and that at least $n+1$ hyperplanes in $\{H_1,\dots,H_q\}$ are in general position. Define the holomorphic map as
    \begin{equation}\label{Func}
        \mathbf{F}:=[h_1f_0:h_2f_0:\hdots:h_w f_0:h_1f_1:\hdots:h_w f_n]:\CC\to\PP^{w(n+1)-1}(\CC).
    \end{equation}
    We note that this is a reduced form, i.e. $h_mf_k$, $1\le m\le w,\,  0\le k\le n$, are entire functions without common zero, since $h_1=1$ and $f_k$, $0\le k\le n$, have no common zero.
    Moreover, $\mathbf{F}$ is linearly non-degenerate over $\CC$ as $\mathbf{f}$ is linearly non-degenerate over $K_{\gamma}$ and as its characteristic function is in the same scale as $\ff$ by the following estimate.
    \begin{equation}\label{char_F}
        \begin{aligned}
            T_{\mathbf{F}}(r)&=\int_0^{2\pi}\log\max_{\substack{1\le i\le w\\0\le j\le n}}|h_i(re^{\sqrt{-1}\theta})f_j(re^{\sqrt{-1}\theta}) |\dfrac{d\theta}{2\pi} \\
            &= \int_{0}^{2\pi}\left(\log \Vert\mathbf{f}(re^{\sqrt{-1}\theta})\Vert +\log\max_{1\leq i\leq w}|h_i(re^{\sqrt{-1}\theta})|\right)\dfrac{d\theta}{2\pi} \\
            &\leq T_{\mathbf{f}}(r) + \int_{0}^{2\pi}\sum_{i=1}^w\log^+|h_i(re^{\sqrt{-1}\theta})|\dfrac{d\theta}{2\pi}\\
            &=T_{\mathbf{f}}(r)+\sum_{i=1}^w m_{h_i}(\infty, r)\le T_{\mathbf{f}}(r)+\sum_{i=1}^w T_{h_i}(r)\quad\text{(by Theorem \ref{Cfirstmain})}\\
            &\le T_{\mathbf{f}}(r)+o(T_{\mathbf{f}}(r)).
        \end{aligned}
    \end{equation}

    Next, we will construct a set of (fixed) hyperplanes in order to apply Theorem \ref{gsmt}. 
    We first observe that  each $h_iL_j=\sum_{k=0}^n h_i\gamma_{jk}X_k$, $1\le i\le u,\,  1\le j\le q$, is a linear form with coefficients in $V(t+1)$.  Therefore, for each $1\le i\le u,\,  1\le j\le q$, there exist $c_{ijk\nu}\in\CC$ such that 
    \begin{align*}
        h_iL_j=\sum_{k=0}^n\sum_{\nu=1}^w  c_{ijk\nu} h_{\nu} X_k.
    \end{align*}        
    For $i=1,\dots,u$ and $j=1,\dots,q$, let $\hat H_{ij}$ be the hyperplanes in $\PP^{w(n+1)-1}(\CC)$
    defined by  the following linear forms over $\CC$:
    \begin{align}\label{Lij}
        \hat L_{ij}=\sum_{k=0}^n\sum_{\nu=1}^w  c_{ijk\nu} X_{k\nu}.
    \end{align}
    It follows from the construction that
    \begin{align}\label{formrelation}
    h_iL_j(x_0,\dots,x_n)=\hat L_{ij}(h_1x_0,\dots,h_1x_n,\dots,h_wx_0,\dots,h_wx_n).
    \end{align}  
    
    Now applying Theorem \ref{gsmt} for $\mathbf{F}$ with the hyperplanes $\hat H_{ij}$, $1\le i\le u,\,  1\le j\le q$, it yields 
    \begin{equation}\label{gsmt3}
        \int_0^{2\pi} \max_{I,J} \sum_{\substack{i\in I\\ j\in J}}\lambda_{\hat H_{ij}}({\mathbf F}(re^{\sqrt{-1}\theta}))\frac{d\theta}{2\pi}+N_{W({\mathbf F})}(0,r)\leq_{\operatorname{exc}} (w(n+1)+\frac{\varepsilon}{2})T_{\mathbf F}(r),
    \end{equation}
    where the maximum ranges over all subsets $I\subset \{1,\dots,u\}$ and $J\subset\{1,\dots,q\}$ such that $\hat L_{ij}$ are linearly dependent over $\CC$.

    We first observe the following relation of  Weil functions of  $\hat H_{ij}$ and $H_{j}(z)$ for $1\le i\le u$, $1\le j\le q$, and $z\in\CC$.
    \begin{equation}\label{Weilrelation}
        \begin{aligned}
            \lambda_{\hat H_{ij}}({\mathbf F}(z)) 
            &=_{\exc} -\log \lvert \hat L_{ij}({\mathbf F}(z)) \rvert+\log \max_{1\leq m\leq w, 0\leq k\leq n} \lvert h_m(z) f_k(z) \rvert \\
            &=_{\exc}-\log |h_i(z) L_j(\ff)(z)| +\log \max_{1\leq m\leq w}|h_m(z)|\Vert \ff(z))\Vert  \text{\quad( by \eqref{formrelation})}\\
            &\geq_{\exc} -\log |L_j(\ff)(z)|+\log \Vert \ff(z))\Vert \\
            &=_{\exc}\lambda_{H_{j}(z)}({\mathbf f}(z))-\log \max_{0\leq k\leq n}|\gamma_{jk}(z)|.
        \end{aligned}
    \end{equation}

    Next, let $J$ be a subset of $\{1,\dots,q\}$ such that $\{H_j(z)\}_{j\in J}$ are in general position for some $z$.  Then  $\{L_j\}_{j\in J}$ must be linearly independent over $K_{\gamma}$.   We claim the following:
 
    \noindent{\bf Claim.}
    If $J$ is a subset of $\{1,\dots,q\}$ such that $\{L_j\}_{j\in J}$ are linearly independent over $K_{\gamma}$,
    then the hyperplanes $  \hat H_{ij}$, $1\le  i\le u$, $j\in J$ are in general position.  

    If the assertion fails, then there exist $\alpha_{ij}\in\CC$, not all zero, such that 
    $$
    \sum_{j\in J} \sum_{i=1}^u \alpha_{ij}\hat L_{ij}=0.
    $$
    Evaluating $\hat L_{ij}$ at $(h_1X_0,\dots,h_1X_n,\dots,h_wX_0,\dots,h_wX_n)$, where $X_0,\dots,X_n$ are variables, 
    it follows from (\ref{formrelation}) that
    \begin{align}\label{linearrel}
        \sum_{j\in J} \sum_{i=1}^u\alpha_{ij} h_iL_j(X_0,\dots,X_n)=0.
    \end{align} 
    Since $\{L_j\}_{j\in J}$ are linearly independent over $K_\gamma$, $\sum_{i=1}^u \alpha_{ij}h_i=0$ for each $1\le i\le u$. Also as $h_1,\dots,h_u$ are linearly independent over $\CC$, we have $\alpha_{ij}=0$ for any $1\le i\le u$ and $j\in J$.
   
    By the claim, together with (\ref{Weilrelation}), we have
    \begin{equation}\label{lambdaineq1}
        \begin{aligned}
            u\int_0^{2\pi} &\max_J \sum_{j\in J}\lambda_{H_j(re^{\sqrt{-1}\theta})}(\ff(re^{\sqrt{-1}\theta}))\frac{d\theta}{2\pi} \\ 
            &\le_{\exc} \int_0^{2\pi}\sum_{i=1}^u \max_{J} \sum_{ j\in J}\lambda_{\hat H_{ij}}({\mathbf F}(re^{\sqrt{-1}\theta}))\frac{d\theta}{2\pi}+o(T_{\mathbf{f}}(r))\\
            &\le_{\exc} \int_0^{2\pi} \max_{I,J} \sum_{\substack{i\in I\\ j\in J}}\lambda_{\hat H_{ij}}({\mathbf F}(re^{\sqrt{-1}\theta}))\frac{d\theta}{2\pi}+o(T_{\mathbf{f}}(r)) 
        \end{aligned}
    \end{equation}
             
    By \eqref{dimV}, we may choose $e$ large enough such that $w/u\leq 1+\frac\varepsilon{2(n+1)}$. We can then complete the proof of the first part by using this inequality together with \eqref{lambdaineq1}, \eqref{gsmt3},  and \eqref{char_F}.

    For Part (2), since  $f_0,\dots,f_n$ are linearly independent over $K_{\gamma}$, it is clear that the holomorphic map $\mathbf{F}$ in \eqref{Func} is linearly non-degenerate over $\CC$.  We have also proved the claim in the previous theorem that if $L_{j_1},\dots,L_{j_\ell}$  are linearly independent over $K_{\gamma}$, then the hyperplanes $\hat H_{ij_t}$ (determined  by linear forms in \eqref{Lij}), $1\le  i\le u$, $1\le t\le \ell$ are in general position over $\CC$.  Then we can apply Lemma \ref{counting} to the map $\mathbf{F}$ with the hyperplanes $\hat H_{ij_t}$, $1\le  i\le u$, $1\le t\le \ell$, to get    
    \begin{equation}
        \sum_{i=1}^u\sum_{t=1}^{\ell} N_{\mathbf{F}}(\hat H_{ij_t},r)-N_W(0,r)\leq  \sum_{i=1}^u\sum_{t=1}^{\ell} N_{\mathbf{F}}^{(Q)}(\hat H_{ij_t},r),
    \end{equation}
    where $Q:=w(n+1)-1$.
    It follows from \eqref{formrelation} that
    \begin{equation*}
        N_{\mathbf{F}}(\hat H_{ij},r)=N_{h_iL_j(\mathbf{f})}(0,r)\geq N_{L_j(\mathbf{f})}(0,r),
    \end{equation*}
    since the $h_i$ are entire functions,  and
    \begin{equation*}
        N_{\mathbf{F}}^{(Q)}(\hat H_{ij},r)=N_{h_iL_j(\mathbf{f})}^{(Q)}(0,r)\leq N_{L_j(\mathbf{f})}^{(Q)}(0,r)+N_{h_i}^{(Q)}(0,r).    \end{equation*}
    Since $N_{L_j(\mathbf{f})}(0,r)=N_\mathbf{f}(H_j,r)$ and $N_{h_i}^{(Q)}(0,r)\le T_{h_i}(r)\le o(T_{\mathbf{f}}(r))$, the above inequalities give
    \begin{equation}\label{trunQ}
        u\sum_{t=1}^{\ell} N_{\mathbf{f}}(H_{j_t},r)-N_W(0,r)\leq u\sum_{t=1}^{\ell} N_{\mathbf{f}}^{(Q)}(H_{j_t},r)+o(T_{\mathbf{f}}(r)).
    \end{equation}
\end{proof}

\subsection{Borel's lemma and Green's theorem with moving targets}
Before starting the proof of our main theorem, it is essential to give a generalization of Borel's lemma (\cite{borel1897zeros}) and Green's theorem  (\cite{green1975some}). 
We recall that $K_{\mathbf{f}}$ is the collection of meromorphic functions such that $T_{u}(r)=o (T_{\bf f}(r))$  and $R_{\mathbf{f}}$ is the subring of $K_{\mathbf{f}}$ consisting of all entire functions in $K_{\mathbf{f}}$.
\begin{lem}
    \label{Borel1}
    Let   $f_0,\dots,f_n$ be non-zero units, i.e. entire functions without zero, and let ${\bf f}=[f_0:\hdots:f_n]$ be a holomorphic map from $\CC$ to  $\PP^n$.
    If there exist $0\ne \gamma_i\in R_{\bf f}$ $(0\le i\le n)$ such that
    \begin{equation}\label{summation1}
        \gamma_0f_0+\dots+\gamma_nf_n=0,
    \end{equation}
    then for each $f_i$, there exists $j\ne i$ such that   $f_i/f_j\in K_{\bf f}$.
\end{lem}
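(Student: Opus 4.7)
I would reduce the statement to a minimal subrelation and then invoke the truncated Borel lemma (Theorem~\ref{trunborel}). Fix an index $i_0 \in \{0,\ldots,n\}$; we must produce $j \ne i_0$ with $f_{i_0}/f_j \in K_{\mathbf{f}}$. Among all non-trivial $\mathbb{C}$-linear combinations of $\gamma_0 f_0,\ldots,\gamma_n f_n$ that vanish identically and involve the $i_0$-th term, choose one of minimal support $I \ni i_0$ and write it as $\sum_{i\in I} \beta_i f_i = 0$ with each $\beta_i \in R_{\mathbf{f}} \setminus \{0\}$. A single-term relation being impossible, $|I| \ge 2$; the case $|I|=2$ is immediate, giving $f_{i_0}/f_j = -\beta_j/\beta_{i_0} \in K_{\mathbf{f}}$.

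In the case $|I| \ge 3$, use Weierstrass factorization to construct an entire $h$ with $\operatorname{ord}_z h = \min_{i\in I}\operatorname{ord}_z \beta_i$ for every $z$, so that $\tilde\beta_i := \beta_i/h$ are entire and share no common zero. Fix some $i^* \in I \setminus \{i_0\}$ and form the holomorphic curve
$$
\mathbf{g} := [\tilde\beta_i f_i]_{i\in I\setminus\{i^*\}} : \mathbb{C} \to \mathbb{P}^{|I|-2}(\mathbb{C}).
$$
This is a reduced form: a common zero $z_0$ of its coordinates would, by unitness of the $f_i$, force $\tilde\beta_i(z_0)=0$ for all $i \ne i^*$, and the relation $\sum_{i\in I}\tilde\beta_i f_i = 0$ would then force $\tilde\beta_{i^*}(z_0)=0$ too, contradicting our choice of $h$.

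Minimality of $I$ ensures that no proper subset of $\{\tilde\beta_i f_i : i \in I\}$ sums to zero, so Theorem~\ref{trunborel} yields
$$
T_{\mathbf{g}}(r) \le_{\operatorname{exc}} \sum_{i\in I} N^{(|I|-2)}_{\tilde\beta_i f_i}(0,r) + O(\log^+ T_{\mathbf{g}}(r)).
$$
Because $f_i$ is a unit, $N^{(|I|-2)}_{\tilde\beta_i f_i}(0,r) \le N_{\tilde\beta_i}(0,r) \le N_{\beta_i}(0,r) \le T_{\beta_i}(r) = o(T_{\mathbf{f}}(r))$; absorbing the log term in the usual manner forces $T_{\mathbf{g}}(r) = o(T_{\mathbf{f}}(r))$. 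Picking any $j \in I \setminus \{i^*,i_0\}$ (available since $|I| \ge 3$) and applying Proposition~\ref{uplow} to the reduced form $\mathbf{g}$ gives
$$
T_{\tilde\beta_j f_j/\tilde\beta_{i_0} f_{i_0}}(r) \le T_{\mathbf{g}}(r) + O(1) = o(T_{\mathbf{f}}(r)),
$$
and combining with $\tilde\beta_j/\tilde\beta_{i_0} = \beta_j/\beta_{i_0} \in K_{\mathbf{f}}$ delivers $f_j/f_{i_0} \in K_{\mathbf{f}}$, as required.

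The main obstacle is the common-factor bookkeeping that produces a reduced form for $\mathbf{g}$ together with verifying that every truncated counting $N^{(|I|-2)}_{\tilde\beta_i f_i}(0,r)$ is genuinely $o(T_{\mathbf{f}}(r))$. Both rely crucially on the unit hypothesis: it reduces the zero divisor of $\tilde\beta_i f_i$ to that of the small entire function $\tilde\beta_i$, whose counting function is in turn dominated by $T_{\beta_i}(r) = o(T_{\mathbf{f}}(r))$. Without the units assumption, the extra contribution from zeros of $f_i$ would overwhelm the bound and block the argument.
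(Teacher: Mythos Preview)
Your proof is correct and follows essentially the same route the paper intends: the paper omits the proof of Lemma~\ref{Borel1}, saying only that it is an easy adaptation of the proof of Lemma~\ref{green1}, and your argument is precisely that adaptation---pass to a minimal vanishing subrelation, divide out the common zero divisor to obtain a reduced representation, apply Theorem~\ref{trunborel}, and exploit the unit hypothesis to force every truncated counting term $N^{(|I|-2)}_{\tilde\beta_i f_i}(0,r)$ to be $o(T_{\mathbf f}(r))$. The only cosmetic difference is that you minimise over $\mathbb C$-linear combinations rather than over subsums; this is harmless, since a vanishing proper subsum of a minimal $\mathbb C$-linear relation would split it into two shorter relations, one of which still contains $i_0$.
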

The proof can be adapted easily from the one of Lemma \ref{green1}, so we omit it here.

\begin{lem}
    \label{green1}
    Let   $f_0,\dots,f_n$ be non-zero entire functions without common zero and let ${\bf f}=[f_0:\hdots:f_n]$ be a holomorphic map from $\CC$ to  $\PP^n$.
    Assume that   for an integer $k\ge n^2$ the following holds:
    \begin{equation}\label{summation2}
        \gamma_0f_0^k+\dots+\gamma_nf_n^k=0,
    \end{equation}
    where $0\ne \gamma_i\in R_{\bf f}$, $0\le i\le n$.
Then for each $f_i$, there exists $j\ne i$ such that   $(f_i/f_j)^k\in K_{\bf f}$.
\end{lem}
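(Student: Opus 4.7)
The plan is to argue by contradiction using a minimal-subrelation reduction combined with the truncated Borel lemma (Theorem~\ref{trunborel}) applied to the entire functions $\delta_j f_j^k$. Fix an index $i$; without loss of generality $i=0$, and suppose $(f_0/f_j)^k\notin K_{\mathbf{f}}$ for every $j\neq 0$. Among all nontrivial $K_\gamma$-linear relations $\sum_{j\in I}\delta_j f_j^k=0$ with $0\in I$ and $\delta_j\in K_\gamma\setminus\{0\}$ for all $j\in I$, choose one whose support $I$ has minimal cardinality; the given $\gamma$-relation witnesses existence, and after clearing denominators one may assume $\delta_j\in R_{\mathbf{f}}$. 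If $|I|=2$, say $I=\{0,j\}$, then $(f_0/f_j)^k=-\delta_j/\delta_0\in K_\gamma\subseteq K_{\mathbf{f}}$, contradicting the assumption; hence $|I|\geq 3$, and after relabeling $I=\{0,1,\ldots,s\}$ with $2\leq s\leq n$. Minimality further forces no proper subsum of $\sum_{j=0}^s\delta_j f_j^k$ to vanish, for any such vanishing subsum, subtracted from the full one when $0$ lies outside it, would yield a smaller relation containing $0$.

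Set $h_j:=\delta_j f_j^k$ for $0\leq j\leq s$; these are entire functions with $\sum_{j=0}^s h_j=0$ and no vanishing proper subsum. After dividing by the greatest common divisor of $h_0,\ldots,h_{s-1}$ (which necessarily also divides $h_s$), Theorem~\ref{trunborel} applied to $\mathbf{h}:=[h_0:\cdots:h_{s-1}]$ with $h_s$ as the extra term yields
\[
T_{\mathbf{h}}(r)\leq_{\exc}\sum_{j=1}^{s}N^{(s-1)}_{h_j}(0,r)+O(\log^+ T_{\mathbf{h}}(r)).
\]
Since the $\delta_j$'s have small growth relative to $\mathbf{f}$, $T_{\mathbf{h}}(r)=k\,T_{\mathbf{F}}(r)+o(T_{\mathbf{f}}(r))$ where $\mathbf{F}:=[f_0:\cdots:f_{s-1}]$. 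Because $k\geq n^2\geq s-1$, each zero of $f_j$ yields a zero of $h_j$ of multiplicity at least $s-1$, so $N^{(s-1)}_{h_j}(0,r)\leq(s-1)\,\overline{N}_{f_j}(0,r)+o(T_{\mathbf{f}}(r))$. Jensen's formula gives $\overline{N}_{f_j}(0,r)\leq T_{\mathbf{F}}(r)+o(T_{\mathbf{f}}(r))$ for $j=0,\ldots,s-1$, and the same for $j=s$ follows from $|\delta_s f_s^k|\leq\sum_{j<s}|\delta_j f_j^k|$. Summing gives $\sum_{j=1}^s N^{(s-1)}_{h_j}(0,r)\leq s(s-1)\,T_{\mathbf{F}}(r)+o(T_{\mathbf{f}}(r))$.

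Combining, $k\,T_{\mathbf{F}}(r)\leq s(s-1)\,T_{\mathbf{F}}(r)+o(T_{\mathbf{f}}(r))$, which together with the comparability $T_{\mathbf{F}}(r)\asymp T_{\mathbf{f}}(r)$ modulo $o(T_{\mathbf{f}}(r))$---itself a consequence of the original $\gamma$-relation constraining all coordinates---forces $k\leq s(s-1)\leq n(n-1)<n^2$, contradicting $k\geq n^2$. The main obstacles are (i) ensuring $\mathbf{h}$ is reduced for the application of Theorem~\ref{trunborel}, handled by the standard gcd trick (altering $T_{\mathbf{h}}$ by $O(1)$ and not raising the truncated counting functions); and (ii) justifying $T_{\mathbf{h}}=kT_{\mathbf{F}}+o(T_{\mathbf{f}})$ and the comparability of $T_{\mathbf{F}}$ with $T_{\mathbf{f}}$, both of which follow from the small growth of the $\delta_j$'s inside $K_\gamma\subseteq K_{\mathbf{f}}$.
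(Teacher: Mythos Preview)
Your strategy is essentially the paper's: pass to a minimal vanishing subsum containing the index in question, apply the truncated Borel lemma (Theorem~\ref{trunborel}), exploit the $k$-th power to gain a factor $(s-1)/k$ on the counting side, and deduce that the projected map has small characteristic. Two steps, however, are not justified as written.

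\textbf{(1) The bound $\overline{N}_{f_j}(0,r)\le T_{\mathbf F}(r)+o(T_{\mathbf f}(r))$ fails in general.} The subfamily $f_0,\dots,f_{s-1}$ may well have a common factor $\beta$ with $N_{\beta}(0,r)$ comparable to $T_{\mathbf f}(r)$; then $\overline N_{f_j}(0,r)$ is at least as large, while $T_{\mathbf F}(r)=T_{[\tilde f_0:\cdots:\tilde f_{s-1}]}(r)$ (with $\tilde f_j=f_j/\beta$) sees none of these common zeros. Jensen's formula for $f_j$ gives $N_{f_j}(0,r)\le T_{\mathbf F}(r)+N_{\beta}(0,r)+O(1)$, and the $N_{\beta}$ term cannot be absorbed into $o(T_{\mathbf f})$. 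The paper avoids this by first reducing to $\tilde f_j=f_j/\beta$ and applying Theorem~\ref{trunborel} to $[\gamma_0\tilde f_0^k/h:\cdots:\gamma_{m-1}\tilde f_{m-1}^k/h]$; then the First Main Theorem gives $N_{\tilde f_j^k}(0,r)\le T_{\tilde{\mathbf f}_k}(r)$ directly. Your gcd of $h_0,\dots,h_{s-1}$ does absorb $\beta^k$, so the \emph{reduced} $\tilde h_j$ only see zeros of $\tilde f_j$ (up to $o(T_{\mathbf f})$ from the $\delta_j$); hence the final inequality $N^{(s-1)}_{\tilde h_j}(0,r)\le (s-1)T_{\mathbf F}(r)+o(T_{\mathbf f})$ is salvageable, but not via the intermediate claim on $\overline N_{f_j}$.

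\textbf{(2) The comparability $T_{\mathbf F}(r)\asymp T_{\mathbf f}(r)$ is neither proved nor true in general,} since the minimal relation only constrains $f_0,\dots,f_s$ and says nothing about $f_{s+1},\dots,f_n$. Fortunately you do not need it: from $(k-s(s-1))\,T_{\mathbf F}(r)\le o(T_{\mathbf f}(r))$ and $k\ge n^2>s(s-1)$ you conclude $T_{\mathbf F}(r)=o(T_{\mathbf f}(r))$, and then by Proposition~\ref{uplow}
\[
T_{(f_0/f_1)^k}(r)\le k\,T_{\mathbf F}(r)=o(T_{\mathbf f}(r)),
\]
so $(f_0/f_1)^k\in K_{\mathbf f}$, contradicting your standing assumption. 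This is exactly the paper's closing step; the claim ``forces $k\le s(s-1)$'' should be replaced by this direct contradiction.
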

\begin{proof}
For a given $f_i$, there exists a vanishing subsum  of \eqref{propersubsum2} consisting of the term $\gamma_if_i^k$ and without any   vanishing proper subsum.  By reindexing, we may assume that this vanishing subsum is
    \begin{equation}\label{propersubsum2}
        \gamma_0f_0^k+\cdots+\gamma_m f_m^k=0,
    \end{equation}
    and hence $0\le i\le m$.
    If $m=1$, then $f_1^k=\alpha f_0^k$ for some $\alpha\in K_{\mathbf{f}}$.  Therefore we assume that $m\ge 2$. Let $\beta$ be an entire function such that $\tilde f_0=f_0/\beta,\dots,  \tilde f_{m-1}=f_{m-1}/\beta$ have no common zero, and let 
    $$
    \tilde{\mathbf f}_k:=[f_0^k:f_1^k:\hdots:f_{m-1}^k]= [\tilde f_0^k:\tilde f_1^k:\hdots:\tilde f_{m-1}^k].
    $$
    Let $h$ be an entire function such that $\gamma_0\tilde f_0^k/h,\gamma_1\tilde f_1^k/h,\cdots,\gamma_{m-1}\tilde f_{m-1}^k/h$ are entire functions with no common zero, and let 
    $$
        \mathbf{F}_k:=[\frac{\gamma_0\tilde f_0^k}h:\frac{\gamma_1\tilde f_1^k}h:\cdots,\frac{\gamma_{m-1}\tilde f_{m-1}^k}h]
    $$ 
    be a holomorphic map from $\CC$ to $\PP^{m-1}(\CC)$.
    Observe that 
    \begin{equation*}
        \max_{0\leq i\leq m-1}\{\log |\gamma_i\tilde f_i^k/h|\}\leq \max_{0\leq i\leq m-1}\{\log |\gamma_i|\}+\max_{0\leq i\leq m-1}\{\log |\tilde f_i^k|\}-\log|h|.
    \end{equation*}
    Then by the definition of characteristic functions, we conclude that
    \begin{equation}
        T_{\mathbf{F}_k}\leq T_{\tilde{\mathbf f}_k}+o(T_{\mathbf{f}}(r)),
    \end{equation}
    and similarly, by writing $ \tilde{\mathbf f}_k=[\frac h{\gamma_0 } \frac{\gamma_0\tilde f_0^k}h: \cdots:\frac h{\gamma_{m-1} } \frac{\gamma_{m-1}\tilde f_{m-1}^k}h]$, we have
    \begin{equation}\label{Charf_k2}
        T_{\tilde{\mathbf f}_k}\leq T_{\mathbf{F}_k}+o(T_{\mathbf{f}}(r)),
    \end{equation}
    
    Theorem \ref{FMT} and the positivity of the proximity function imply
    \begin{equation}\label{positivity}
        N_{\tilde f_i^k}(0,r)\leq T_{\tilde{\mathbf f}_k}
    \end{equation}
    for $i=0,\dots,m$ by taking the hyperplanes $H_i:=\{x_0:=0\}$ (for $i=0,\dots,m-1$) and $H_m:=\{x_0+\cdots+x_{m-1}=0\}$.
    Applying Theorem \ref{trunborel} to the map $\mathbf{F}_k$ with equation \eqref{propersubsum2},  we have
    \begin{equation}\label{truncation2}
        \begin{aligned}
            T_{\mathbf{F}_k}(r)&\leq_{\exc}  \sum_{i=0}^{m} N_{\gamma_i \tilde f_i^k/h}^{(m-1)}(0,r) +O(\log^+T_{\mathbf{F}_k}(r))\\
            &\leq \sum_{i=0}^{m} N_{\gamma_i}(0,r)+(\frac{m-1}{k})\sum_{i=0}^{m} N_{\tilde f_i^k}(0,r) +o(T_{\mathbf{f}}(r))\\
            &\leq \sum_{i=0}^{m} T_{\gamma_i}(r)+ \frac{(m+1)(m-1)}{k} T_{\tilde{\mathbf f}_k}(r) +o(T_{\mathbf{f}}(r)) \text{\quad by\ \eqref{positivity}}\\
            &\leq \frac{(m+1)(m-1)}{k} T_{\tilde{\mathbf f}_k}(r)+ o(T_{\mathbf{f}}(r)).
        \end{aligned}
    \end{equation}
    Together with \eqref{Charf_k2} and Proposition \ref{uplow}, this yields
    \begin{equation}
        \left(1-\frac{(m+1)(m-1)}{k}\right) T_{f_i/f_j }(r)\le \left(1-\frac{(m+1)(m-1)}{k}\right)T_{\mathbf{F}_k}(r) \le o(T_{\mathbf{f}}(r)).
    \end{equation}
    If $k\ge n^2>(m+1)(m-1)$, then 
    \begin{equation*}
        T_{f_i/f_j}(r)\leq o(T_{\mathbf{f}}(r))
    \end{equation*}
    for any $1\leq i<j\leq m$. Hence $f_i/f_j\in K_{\mathbf{f}}$ for any $1\leq i<j\leq m$. 
\end{proof}

Now we have finalized our preparation and we can begin the proof of our main theorem. 

\section{Proof of Theorem \ref{maintheorem} }

\begin{proof}[Proof of Theorem \ref{maintheorem} (i)]
    Assume that $f_1,\dots,f_l,g_1,\dots,g_m$ are entire functions such that $f_1^{i_1}\cdots f_l^{i_l}g_1^{j_1}\cdots g_m^{j_m}\notin K_{\mathbf{g}}$ for any index set  $(i_1,\dots,i_l,$ $j_1, \dots,j_m) \in\ZZ^{l+m}\setminus\{(0,0,\dots,0)\}$. Suppose that $$q(n):=\frac{F(n)}{G(n)}=\frac{a_0+a_1f_1^n+\cdots+a_lf_l^n}{b_0+b_1g_1^n+\cdots+b_mg_m^n}$$ is an entire function for a positive integer $n$.
    Since $$\max_{1\leq i\leq l} T_{f_i}(r)\asymp \max_{1\leq j\leq m} T_{g_j}(r),$$ there exist two positive constants $a,b$ such that 
    \begin{equation*}
        a\max_{1\leq j\leq m} T_{g_j}(r) \geq  \max_{1\leq i\leq l} T_{f_i}(r) \geq  b\max_{1\leq j\leq m} T_{g_j}(r).
    \end{equation*}
    Observe that there exists a subset $S$ of $\RR^+$ of infinite Lebesgue measure such that $\max_{1\leq j\leq m} T_{g_j}(r)=T_{g_k}(r)$ for $r\in S$ and for some $k\in\{1,\dots,m\}$. By rearranging the indices, we may assume that $k=1$. Thus $$T_{f_i}(r)\leq \max_{1\leq i\leq l} T_{f_i}(r)\leq a\max_{1\leq j\leq m} T_{g_j}(r)=aT_{g_1}(r)$$ for $1\leq i\leq l$ and $r\in S$. Without loss of generality, we may assume that $a>1$. Then for $r\in S$,
    \begin{align}\label{conditiona}
        T_{f_i}(r)\le a T_{g_1}(r),\, 1\le i\le l, \text{ and }  T_{g_j}(r)\le a T_{g_1}(r), \, 1\le j\le m.   
    \end{align}    
     
    Fix two positive integers $s,t$ to be determined later. Let 
    $$
    G_1(n)=G(n)-b_1g_1^n.
    $$ 
    Then
    \begin{equation}
        \label{Gn}
        G_1(n)^sq(n)=F(n)\left( \sum_{k=0}^{s-1}\binom{s}{k}G(n)^{s-1-k}(-b_1g_1^{n})^{k} \right)+(-b_1g_1^{n})^sq(n).
    \end{equation}
    We will use the following notation throughout the proof.  Denote ${\mathbf c}:=(0,c_{2},\dots,c_{m})\in (\ZZ_{\geq 0})^{m}$ and ${\mathbf d}:=(d_{1},\dots,d_{m})\in (\ZZ_{\geq 0})^{m}$.  Let  $|{\mathbf c}|:=c_{2}+\cdots+c_{m}$ and $|{\mathbf d}|=d_{1}+\cdots+d_{m}$.  We use the graded lexicographic order to arrange the index sets ${\mathbf c}\in (\ZZ_{\geq 0})^{m-1}$ and ${\mathbf d}\in (\ZZ_{\geq 0})^{m}$, i.e. ${\mathbf c}_i\succ {\mathbf c}_j$ if and only if $|{\mathbf c}_i|>|{\mathbf c}_j|$ or $|{\mathbf c}_i|=|{\mathbf c}_j|$ and the left-most nonzero entry of ${\mathbf c}_i-{\mathbf c}_j$ is positive.
    Denote by
    \begin{align}\label{alphabeta}
        \mathbf{g}^{n{\mathbf c}}:=g_2^{nc_2}\cdots g_m^{nc_m}\quad\text{and }\quad  \mathbf{g}^{n\mathbf d}=g_1^{nd_1}\cdots g_m^{nd_m}.
    \end{align}   
    For  each  ${\mathbf c}_i$ with $|{\mathbf c}_i|\leq t$, we define  
    \begin{equation}\label{phic}
        \varphi_{\mathbf{c}_i}:=\left[G_1(n)^sq(n)-F(n)\left( \sum_{k=0}^{s-1}G(n)^{s-1-k}(-b_1g_1^n)^{k}\right) \right]\mathbf{g}^{{n\mathbf c}_i}.
    \end{equation}
    Note that the number of such $\varphi_{\mathbf{c}_i}$ is 
    \begin{equation*}
        M=\binom{m-1+t}{m-1}.
    \end{equation*}
    Moreover, every $\varphi_{\mathbf{c}_i}$ is a linear combination of $\mathbf{g}^{n{\mathbf c}}q(n)$ where $|{\mathbf c}|\leq t+s$ and of the forms $\mathbf{g}^{n\mathbf d}f_i^{n}$ with $|{\mathbf d}|\leq s+t$ and $e_0\le i\le l$ where $e_0=1$ if  $a_0=0$ and $e_0=0$ if  $a_0\ne 0$ (letting $f_0=1$ in this case). Then the number of such forms $\mathbf{g}^{{n\mathbf c}}q(n)$ is
    \begin{equation*}
        N_1:=\binom{m-1+t+s}{m-1},
    \end{equation*}
    and the number of  ${\mathbf d}$ appearing in the above expression is denoted by $N_2$. 
    Denote $N:=N_1+(l+1)N_2$.   
    Let
    \begin{align}\label{xi}
        x_i(n)&:=\mathbf{g}^{{n\mathbf c}_i}q(n),  \quad  1\le i\le N_1,\quad \text{and}\cr
    x_{N_1+iN_2+j}(n)&:=f_i^n\mathbf{g}^{n{\mathbf d}_j}, \quad   e_0\le i\le l ,\, 1\le j\le N_2.
    \end{align}  
    Then it follows immediately  from \eqref{Gn} and \eqref{phic} that 
    \begin{equation}
        \label{g1n}
        \varphi_{{\mathbf c}_i}=(-b_1g_1^{n})^sq(n)\mathbf{g}^{{n\mathbf c}_i}=(-b_1)^sx_i(n)g_1^{sn}\quad \text{for  }1\le i \le M.
    \end{equation}
    Let $\mathbf{x}$ be the holomorphic map defined by 
    \begin{equation}
        \begin{aligned}\label{mapf}
            \mathbf{x}=[x_1(n):x_2(n):\hdots,x_N(n)]:\CC\to\PP^{N-1}(\CC).
        \end{aligned}
    \end{equation}
    We observe that  $a_ib_0^{s-1}f_i^n$, $e_0\le i\le l$ appear in the expansion of \eqref{phic} with ${\mathbf c}=(0,\dots,0)$.
    Since $b_0\ne0$,  we have $f_i^n$ , $e_0\le i\le l$, for our choice of $x_j(n)$.  Since they have no common zero, 
    $\mathbf{x}=(x_1(n),x_2(n),\dots,x_N(n))$ is a reduced form. 
    To simplify notation, we assume that $a_0\ne 0$ from now on.  The arguments are the same if $a_0=0$.
 
    For any $u\in K_{\mathbf{g}}$, notice that $T_u(r)\leq o(T_{\mathbf{g}}(r))\leq o(T_{\mathbf{x}}(r))$. 
    We claim  that this map is linearly independent over $K_{\mathbf{g}}$ if $n$ is sufficiently large. If the claim does not hold for a large enough $n$, there exist entire functions $\gamma_1,\dots,\gamma_{N_1},\mu_{0,1}$, $\mu_{0,2},\dots, \mu_{l,N_2}$ with no common zero in $K_{\mathbf{g}}$ which are not all zero such that 
    \begin{equation}
        \sum_{i=1}^{N_1}\gamma_{i}\mathbf{g}^{n{\mathbf c}_i}q(n)+\sum_{j=0}^l\sum_{k=1}^{N_2}\mu_{j,k}\mathbf{g}^{n{\mathbf d}_k}f_j^n=0,
    \end{equation}
    and hence 
    \begin{equation}
    \label{eqgm}
        \begin{aligned}
            &\sum_{i=1}^{N_1}\gamma_{i}\mathbf{g}^{n{\mathbf c}_i}(a_0+a_1f_1^n+\cdots+a_lf_l^n)\\
            &\quad +\sum_{j=0}^l\sum_{k=1}^{N_2}\mu_{j,k}\mathbf{g}^{n{\mathbf d}_k}f_j^n(b_0+b_1g_1^n+\cdots+b_mg_m^n)=0.
        \end{aligned}
    \end{equation}
    If all of the $\mu_{j,k}$ are zeros, then 
    \begin{equation}
       0= \sum_{i=1}^{N_1}\gamma_{i}\mathbf{g}^{n{\mathbf c}_i}(a_0+a_1f_1^n+\cdots+a_lf_l^n)=\sum_{i=1}^{N_1}\sum_{j=0}^la_j\gamma_{i}\mathbf{g}^{n{\mathbf c}_i}f_j^n.
    \end{equation} 
    Since not all of the  $\gamma_{i}$ are zeros,  by Lemma \ref{green1}, when $n>(l+1)^2N_1^2$, there exist two distinct terms $ f_j^n\mathbf{g}^{n{\mathbf c}_i}$ and $f_{j'}^n\mathbf{g}^{n{\mathbf c}_{i'}}$ such that the quotient 
    $$
    \frac{f_j^n\mathbf{g}^{n{\mathbf c}_i}}{f_{j'}^n\mathbf{g}^{n{\mathbf c}_{i'}}}=f_j^n f_{j'}^{-n}\mathbf{g}^{n({\mathbf c_{i}}-{\mathbf c_{i'})}}\in K_{\mathbf{g}},
    $$ 
    which contradicts the hypothesis that $f_1^{j_1}\cdots f_l^{j_l}g_1^{k_1}\cdots g_m^{k_m}\notin K_{\mathbf{g}}$ for any non-trivial index set $(j_1,\dots,j_l,k_1,\dots,k_m)$ $\in\ZZ^{l+m}$.  Therefore, we may assume that not all of the $\mu_{j,k}$ are zeros and let ${\mathbf d}_{k_0}$ be the maximal element (with respect to the graded lexicographic order) among the set $\{{\mathbf d}_k|\mu_{j,k}\ne 0$ for some $0\le j\le l\}$. 
    Naturally,  $\mu_{j_0,k_0}\ne 0$ for some $0\le j_0\le l$. 
    Expanding \eqref{eqgm} and using Lemma \ref{green1}  for $n>(l+1)^2(N_1+N_2(m+1))^2$,  we can find $f_{j'}^n\mathbf{g}^{n{\mathbf d}_{k'}}g_{j''}^n$ with $(j',k',j'')\ne (j_0,k_0,1)$,   or $f_{k'}^n\mathbf{g}^{n{\mathbf c}_{i}}$    among the zero terms of the expansion of \eqref{eqgm} such that 
    \begin{equation}\label{quot}
        \frac{f_{j_0}^n\mathbf{g}^{n{\mathbf d}_{k_0}}g_1^n}{f_{j'}^n\mathbf{g}^{n{\mathbf d}_{k'}}g_{j''}^n}\in K_{\mathbf{g}}  \quad \text{or} \quad \frac{f_{j_0}^n\mathbf{g}^{n{\mathbf d}_{k_0}}g_1^n}{f_{k'}^n\mathbf{g}^{n{\mathbf c}_{i}}}\in K_{\mathbf{g}}.
    \end{equation}
    By the definition of $\mathbf{g}^{n{\mathbf c_i}}$ in \eqref{alphabeta}, it  is clear that the second relation leads to a contradiction to the assumption that   $f_1^{j_1}\cdots f_l^{j_l}g_1^{k_1}\cdots g_m^{k_m}\notin K_{\mathbf{g}}$ for any non-trivial index set $(j_1,\dots,j_l,k_1,\dots,k_m)$ $\in\ZZ^{l+m}$.  
    Since $(j',k',j'')\ne (j_0,k_0,1)$ and the graded lexicographic order associated with the index set of  $\mathbf{g}^{n{\mathbf d}_k}g_1^n$ is bigger than the one with $\mathbf{g}^{n{\mathbf d}_{k'}}g_{j''}^n$ unless $(j',k')=(j_0,k_0)$, we can conclude similarly that the first quotient is not in $K_{\mathbf{g}}$, a contradiction.

    Next, we will construct a set of hyperplanes in $\PP^{N-1}(K_{\mathbf{g}})$ in order to apply Theorem \ref{movingsmt}.
    Since $G_1(n)=b_0+b_2g_2^n+b_3g_3^n+\cdots+b_mg_m^n$ with $b_0\ne 0$, the graded lexicographic order imposed on the ${\mathbf c}$ and the choice of the $x_i(n)$ give the following expression of $\phi_{{\mathbf c}_i}$ for $1\leq i\leq M$,
    \begin{equation}\label{phici}
        \begin{aligned}
            \varphi_{{\mathbf c}_i}&=A_{i,i}x_i(n)+A_{i,i+1}x_{i+1}(n)+ \cdots+A_{i,N}x_N(n),
        \end{aligned}
    \end{equation}
    where $A_{i,j}\in K_{\mathbf{g}}$, $1\le i\le M$, $i\le j\le N$ and  $A_{i,i}=b_0^s$ for each $i=1,\dots,M$. 
    Let
    \begin{equation}\label{hyperplane1}
        H_i :=\{X_{i-1}=0\}, \quad 1\le i\le N,    
        \end{equation}
        be the coordinate hyperplanes of $\PP^{N-1}(K_{\mathbf{g}})$, and  
    \begin{equation}\label{hyperplane}
        \begin{aligned}
            H_{N+i}&:=\{L_{N+i}:=A_{i,i}X_{i-1}+A_{i,i+1}X_i+\cdots+A_{i,N}X_{N-1}=0\},  \quad 1\le i\le M,
        \end{aligned}
    \end{equation}
    be hyperplanes according to the expression \eqref{phici} of $\varphi_{{\mathbf c}_i}$. It is clear from \eqref{hyperplane} that the hyperplanes $H_{M+1},\dots,H_{N+M}$ in $\PP^{N-1}(K_{\mathbf{g}})$  are in general position, and moreover $H_{M+1}(z),\dots,H_{N+M}(z)$    are in general position  for all $z\in\CC$ which are not a zero of $b_0$.
 Moreover, it's clear from  \eqref{phici} and \eqref{hyperplane} that \eqref{g1n} gives
 \begin{align}\label{Lvalue}
 L_{N+i} (\mathbf{x})=(-b_1)^sx_i(n)g_1^{sn}.
 \end{align}
  Let $e$ be any arbitrary large integer and  $V(e)$ be the $\CC$ vector space spanned by the set
    \begin{equation*}
        \left\{\prod_{ j=1}^ M\prod_{k=1}^N A_{jk}^{n_{jk}}\middle|n_{jk}\geq 0,\sum_{ j=1}^M\sum_{k=1}^N n_{jk}\leq e\right\}
    \end{equation*}
    and let $u=\dim V(e)$ and $w=\dim V(e+1)$.  Let $1=h_1,\dots,h_u$ be a basis of $V(e)$ and $h_1,\dots,h_w$ be a basis of $V(e+1)$.  Let $W$ be the Wronskian of $\{h_j x_k(n) |1\leq j\leq w,\, 1\leq k\leq N-1\}$.
    Now we apply Theorem \ref{movingsmt} to the map $\mathbf{x}=(x_1(n),x_2(n),\dots,x_N(n))$ and the hyperplanes $H_1,\dots,H_{N+M}$.  Then we obtain
    \begin{equation}\label{gsmt2}
        \begin{aligned}
            \int_{0}^{2\pi}\max_\mathcal{J}\sum_{j\in \mathcal{J}}\lambda_{H_{j}(re^{\sqrt{-1}\theta})}(\mathbf{x}(re^{\sqrt{-1}\theta}))\dfrac{d\theta}{2\pi}+\frac1{u}N_{W}(0,r)\leq_{\exc} (N+\varepsilon) T_{\mathbf{x}}(r),
        \end{aligned}
    \end{equation}
    where $\mathcal{J}$ runs over the subsets of $\{1,\dots,N+M\}$ such that the hyperplanes $H_j(re^{\sqrt{-1}\theta})$ $(j\in \mathcal{J})$ are in general position.
    
    We now proceed to derive a lower bound for the left hand side of (\ref{gsmt2}).  For any meromorphic function $\xi$, denote 
    $|\xi|_{r,\theta}:=|\xi(re^{\sqrt{-1}\theta})|$ and $\|\mathbf{x}\|_{r,\theta}:=\max_{1\le k\le N}| x_k(n) (re^{\sqrt{-1}\theta})|$.
    We claim that the following inequality holds for all $r$ outside of a set $E\subset (0,+\infty)$ with finite Lebesgue measure.
    \begin{equation}\label{lambda}
        \begin{aligned}
            \max_\mathcal{J}\sum_{j\in \mathcal{J}}\lambda_{H_{j}(re^{\sqrt{-1}\theta})}(\mathbf{x}(re^{\sqrt{-1}\theta}))\geq&  N\log \Vert \mathbf{x}\Vert_{r,\theta}-\sum_{i=1}^N\log|x_i(n)|_{r,\theta}+Msn\log^+ | g_1^{-1}|_{r,\theta} \\
            &\quad+Ms(\log^-   |b_0|_{r,\theta}-\log^+ |b_1|_{r,\theta}).
        \end{aligned}
    \end{equation}
    Since the zero set of an entire function is discrete, we may only consider $r$ with $b_0(re^{\sqrt{-1}\theta})\ne 0$ for  any $\theta$.  For $\theta\in S_{r}^-:=\{\theta:|g_1^n|_{r,\theta}<1\}$,  we choose $\mathcal{J}$ to be the set consisting of hyperplanes $H_{M+1}(re^{\sqrt{-1}\theta}),\dots,H_{N+M}(re^{\sqrt{-1}\theta})$ (since $b_0(re^{\sqrt{-1}\theta})\ne 0$) which are in general position and make the following computation.
    \begin{equation}\label{splus}
        \begin{aligned}
            &\sum_{j=M+1}^{N+M}\lambda_{H_{j}(re^{\sqrt{-1}\theta})}(\mathbf{x}(re^{\sqrt{-1}\theta}))\\
            &=\sum_{i=M+1}^N \log \frac{\Vert \mathbf{x}\Vert_{r,\theta}}{|x_i(n)|_{r,\theta}}+\sum_{i=1}^{M} \log \frac{\Vert \mathbf{x}\Vert_{r,\theta}\max_{i\leq j\leq N}\vert A_{i,j}\vert_{r,\theta}}{|b_1^sx_i(n)g_1^{sn}|_{r,\theta}} \quad\text{(by }\eqref{Lvalue}) \\
            &= N\log \Vert \mathbf{x}\Vert_{r,\theta}-\sum_{i=1}^N\log|x_i(n)|_{r,\theta}-Msn\log | g_1|_{r,\theta} +\sum_{i=1}^M(\log  \max_{i\leq j\leq N}\vert A_{i,j}\vert_{r,\theta}-\log |b_1^s|_{r,\theta})\\
            &\geq N\log \Vert \mathbf{x}\Vert_{r,\theta}-\sum_{i=1}^N\log|x_i(n)|_{r,\theta}-Msn\log^- | g_1|_{r,\theta}+M(\log   |b_0^s|_{r,\theta}-\log |b_1^s|_{r,\theta}),
        \end{aligned}
    \end{equation}
    where the last inequality follows from the identity $ A_{i,i}=b_0^s$.
    For $\theta\in S_{r}^+:=\{\theta:|g_1^n|_{r,\theta}\ge 1\}$, we choose $\mathcal{J}$ to be the set consisting of  hyperplanes $H_{1},\dots,H_{N}$ which are the coordinate hyperplanes of $\PP^{N-1}$.  Then
    \begin{equation}\label{sminus}
        \begin{aligned}
            \sum_{i=1}^{N}\lambda_{H_{i}(re^{\sqrt{-1}\theta})}(\mathbf{x}(re^{\sqrt{-1}\theta}))&=\sum_{i=1}^N \log \frac{\Vert \mathbf{x}\Vert_{r,\theta}}{|x_j(n)|_{r,\theta}}\\
            &=N\log \Vert \mathbf{x}\Vert_{r,\theta}-\sum_{i=1}^N\log|x_i(n)|_{r,\theta}-Msn\log^- | g_1|_{r,\theta},
        \end{aligned}
    \end{equation}
    since $\log^- | g_1^{sn}|_{r,\theta}^{-1}=0$
    on $S_r^+$.
  The assertion \eqref{lambda} is now verified by  \eqref{splus} and \eqref{sminus}.
  
    Integrating \eqref{lambda} over $d\theta$ from $0$ to $2\pi$, we derive from Theorem \ref{Cfirstmain}, Theorem \ref{Jensen} and the definition of the proximity and characteristic functions that
    \begin{equation}\label{lowb}
        \begin{aligned}
            &\int_0^{2\pi}\max_\mathcal{J}\sum_{j\in \mathcal{J}}\lambda_{H_{j}}(\mathbf{x}(re^{\sqrt{-1}\theta}))\dfrac{d\theta}{2\pi}\\
            &\geq  N T_{\mathbf{x}}(r)-\sum_{j=1}^N N_{x_j(n)}(0,r)+ Msn\cdot m_{g_1}(0,r)-Ms\cdot m_{b_0}(0,r)-Ms\cdot m_{b_1}(\infty,r)\\
            &\geq N T_{\mathbf{x}}(r)-\sum_{j=1}^N N_{x_j(n)}(0,r)+ MsnT_{ g_1}(r)-Msn N_{ g_1}(0,r)-Ms(T_{b_0}(r)+T_{b_1}(r))-O(1)\\
            &= N T_{\mathbf{x}}(r)+M s n T_{g_1}(r)-\sum_{j=M+1}^{N+M} N_{\mathbf{x}}(H_j,r)-Ms(T_{b_0}(r)+m_{b_1}(r))-O(1),
        \end{aligned}
    \end{equation}
    where the last one is due to the following identifications.
    \begin{equation}
        \begin{aligned}
            N_{\mathbf{x}}(H_{j},r)&=N_{x_j(n)}(0,r), \quad M+1\le j\le N,\quad\text{and}  \\
     N_{\mathbf{x}}(H_{N+j},r)& = N_{b_1^s}(0,r)+N_{x_j(n)}(0,r)+snN_{ g_1}(0,r), \quad 
            \quad 1\le j\le M,
        \end{aligned}
    \end{equation}
  by \eqref{Lvalue} and that $b_1$, $g_1$ and the $x_j(n)$ are entire functions.

    Since $H_{M+1},\dots,H_{N+M}$ are in general position, Theorem \ref{movingsmt}, \eqref{xi} and \eqref{Lvalue} imply there exists an integer $Q$ (may take $Q=w N-1$) such that
    \begin{equation}\label{trun2}
        \begin{aligned}
             \sum_{j=M+1}^{N+M}&N_{\mathbf{x}}(H_j,r)-N_{W}(0,r) \le \sum_{j=M+1}^{N+M}N_{\mathbf{x}}^{(Q)}(H_j,r)+o(T_{\mathbf{x}}(r))\\
            &\leq N Q \left(\sum_{i=1}^l N_{f_i}(0,r)+\sum_{j=1}^m N_{g_j}(0,r)\right)+ N_1N_{q(n)}(0,r)  + M s N_{b_1}(0,r)+o(T_{\mathbf{x}}(r))\\
            &\leq  N Q \left(\sum_{i=1}^l T_{f_i}(r)+\sum_{j=1}^m T_{g_j}(r)\right)+ N_1N_{F(n)}(0,r) + M s T_{b_1}(r)+o(T_{\mathbf{x}}(r))\\
            &\leq  N Q a(l+m)T_{g_1}(r)+ N_1T_{F(n)}(r)+o(T_{\mathbf{x}}(r)).
        \end{aligned}
    \end{equation}
     
    We also note that
    \begin{equation}\label{Fn}
        \begin{aligned}
            T_{F(n)}(r)&\leq \sum_{i=1}^l T_{f_i^n}(r)+\sum_{j=0}^l T_{a_j}(r) \leq  a l n T_{g_1}(r)+o(T_{\mathbf{x}}(r)).
        \end{aligned}
    \end{equation}

    Combining  \eqref{gsmt2}, \eqref{lowb}, \eqref{trun2} and \eqref{Fn} for $r\in S\setminus E$ large enough, we have
    \begin{equation}\label{finalinequality}
        \begin{aligned}
            (M s n- N Q a(l+m)- N_1 a l n) T_{g_1}(r) \le_{\exc} 2\varepsilon T_{\mathbf{x}}(r).
        \end{aligned}
    \end{equation}
    By the property of characteristic function, we obtain
    \begin{equation}
        \begin{aligned}
            T_{x_i(n)}(r)&\leq a(s+t)T_{g_1^n}(r)+T_{q(n)}(r) \text{\quad for \ } 1\leq i\leq N_1;\\
            T_{x_i(n)}(r)&\leq a(s+t+1)T_{g_1^n}(r) \text{\quad for \ } N+1\leq i\leq N;\\
            T_{q(n)}&\leq T_{F(n)}(r)+T_{G(n)}(r)\leq a(l+m)T_{g_1^n}(r).
        \end{aligned}
    \end{equation}
    So by Proposition \ref{uplow}, for $r\in S$ we have
    \begin{equation}
        \begin{aligned}
            T_{{\mathbf x}}(r)&\leq \sum_{j=1}^NT_{x_j(n)}(r)\\
            &\leq N(s+t+1)aT_{g_1^n}(r)+N_1T_{q(n)}(r)+O(1)\\
            &\leq N(s+t+1)aT_{g_1^n}(r)+N_1a(l+m)T_{g_1^n}(r)+O(1)\\
            &=an(N(s+t+1)+N_1(l+m))T_{g_1}(r)+O(1).
        \end{aligned}
    \end{equation}
    Thus from \eqref{finalinequality}, we conclude 
    \begin{equation}\label{finalineq2}
        \begin{aligned}
            ( M s n-N_1 a l n -N Q a(l+m))T_{g_1}(r)\leq_{\exc} 3\varepsilon an(N(s+t+1)+N_1(l+m))T_{g_1}(r)+O(1).
        \end{aligned}
    \end{equation}
    
    Finally, the parameters $s$, $t$ and $\varepsilon$ will be selected now to derive a contradiction from the above inequality.    
    To begin with, we fix $s>al$. 
    Since 
    \begin{equation*}
        Ms=s\binom{m-1+t}{m-1}=\frac{s}{(m-1)!}t^{m-1}+o(t^{m-1})
    \end{equation*}
    and
    \begin{equation*}
        \quad N_1al=al\binom{m-1+t+s}{m-1}=\frac{al}{(m-1)!}t^{m-1}+o(t^{m-1})
    \end{equation*}
    can be regarded as polynomials of $t$ with degrees both $m-1$ and the leading coefficient of $Ms$ is larger than the one for $aN_1l$, when $t$ is a sufficiently large integer, $Ms>N_1al$. Then we can choose $\varepsilon$ satisfying
    \begin{equation*}
        0<\varepsilon<\frac{Ms-aN_1l}{3a(N(s+t+1)+N_1(l+m))}.
    \end{equation*}
    Consequently, since $g_1$ is nonconstant, $T_{g_1}(r)(>0)$ is unbounded and we may deduce from \eqref{finalineq2} that
    \begin{equation*}
        n< n_0:=\frac{N Q a(l+m)}{M s-N_1 a l-3\varepsilon a(N(s+t+1)+N_1(l+m))}.
    \end{equation*} 
    In conclusion, if
    \begin{equation*}
        f_1^{i_1}\cdots f_l^{i_l}g_1^{j_1}\cdots g_m^{j_m}\notin K_{\mathbf{g}}
    \end{equation*}
    for any non-trivial index set $(i_1,\dots,i_l,j_1,\dots,j_m)\in\ZZ^{l+m}$, then the ratio $F(n)/G(n)$ is not an entire function for $n>\max\{n_0,n_1\}$, where $n_1=(l+1)^2(N_1+N_2(m+1))^2$ is such that $\mathbf{x}$ is linearly non-degenerate for $n> n_1$.
\end{proof}

For the second part of Theorem \ref{maintheorem}, firstly, we just need to replace Lemma \ref{green1} with Lemma \ref{Borel1} to conclude that the expression $x(1)$ is not contained in any proper linear subspace. Secondly, the facts that $N_{\xi}(0,r)=0$ for any unit $\xi$ and that $N_W(0,r)\geq 0$ imply that the left side of \eqref{trun2} is not greater than zero. Consequently, \eqref{finalinequality} becomes 
\begin{equation}\label{part2}
    \begin{aligned}
        M s  T_{g_1}(r) \le_{\exc} 2\varepsilon an(N(s+t+1)+N_1(l+m))T_{g_1}(r)+O(1).
    \end{aligned}
\end{equation}
Since $T_{g_1}>0$ is unbounded, we can choose $$0<\varepsilon< \frac{Ms}{2an(N(s+t+1)+N_1(l+m))}$$ and obtain 
\begin{equation}
    \begin{aligned}
        M s  T_{g_1}(r) > 2\varepsilon an(N(s+t+1)+N_1(l+m))T_{g_1}(r)+O(1)
    \end{aligned}
\end{equation}
when $r$ is sufficiently large, which contradicts \eqref{part2}.

\noindent\textbf{Acknowledgments.}  This work is one part of my PhD thesis, so I want to express my gratitude to my advisor Julie Tzu-Yueh Wang for her careful guidance and suggestions.


\begin{thebibliography}{99}

\bibitem{borel1897zeros}
\textsc{E. Borel,} \textit{Sur les z{\'e}ros des fonctions enti{\`e}res,}  Acta mathematica, {\bf 20} no. 1 (1897),  357--396.

\bibitem{corvaja1998diophantine} \textsc{P. Corvaja, U. Zannier,}  \textit{Diophantine equations with power sums and universal Hilbert sets,} Indagationes Math. {\bf 9} (1998), 317--332.

\bibitem{corvaja2002finiteness}
\textsc{P. Corvaja, U. Zannier,} \textit{Finiteness of integral values for the ratio of two linear recurrences,}  Inventiones mathematicae, {\bf 149} no. 2 (2002),  431--451.

\bibitem{everest1997factorisation}
\textsc{G. Everest and A. van der Poorten,} \textit{Factorisation in the ring of exponential polynomials,}  Proceedings of the American Mathematical Society, {\bf 125} no. 5 (1997),  1293--1298.

\bibitem{green1975some}
\textsc{Some Picard theorems for holomorphic maps to algebraic varieties,} \textit{M. L. Green,}  American Journal of Mathematics, {\bf 97} no. 1 (1975),  43--75.


\bibitem{guo2019quotient}
\textsc{J. Guo,} \textit{The quotient problem for entire functions,} Canadian Mathematical Bulletin, {\bf 62} no. 3 (2019),  479--48.

\bibitem{guo2019asymptotic}
\textsc{J. Guo and J. T.-Y. Wang,} \textit{Asymptotic GCD and divisible sequences for entire functions,}  Transactions of American Mathematical Society, {\bf 371} no. 9  (2019),  6241–6256.

\bibitem{lang1987introduction}
\textsc{S. Lang,} \emph{Introduction to Complex Hyperbolic Spaces,} Springer-Verlag, New York, 1987.

\bibitem{lax1948quotient}
\textsc{P.D. Lax,} \textit{The quotient of exponential polynomials,}  Duke Mathematical Journal, {\bf 15} no. 4 (1948), 967--970.

\bibitem{liu2017upper}
\textsc{ X.J. Liu and G.S. Yu,} \textit{Upper Bounds of GCD Counting Function for Holomorphic Maps,}  The Journal of Geometric Analysis, {\bf 29} no. 2 (2017),  1032–1042.

\bibitem{pasten2016gcd}
\textsc{H. Pasten and J. T.-Y. Wang,} \textit{GCD Bounds for Analytic Functions,} International Mathematics Research Notices {\bf 160}, no. 2 (2016), 47--95.

\bibitem{rahman1960quotient}
\textsc{Q.I. Rahman,} \textit{The quotient of finite exponential sums,}  Tohoku Mathematical Journal, Second Series, {\bf 12} no. 2 (1960),  345--348.


\bibitem{ru1991second}
\textsc{ M. Ru and W. Stoll,} \textit{The second main theorem for moving targets,} The Journal of Geometric Analysis, {\bf 1} no. 2 (1991),  99--138.

\bibitem{ru2001nevanlinna}
\textsc{M. Ru,}  \textit{Nevanlinna theory and its relation to Diophantine approximation,}  World Scientific, 2001.

\bibitem{ru2004truncated}
\textsc{M. Ru and J. T.-Y. Wang,} \textit{Truncated second main theorem with moving targets,} Transactions of the American Mathematical Society, {\bf 356} no. 2 (2004), 557--571.



\bibitem{ritt1927factorization}
\textsc{J. F. Riitt,} \textit{A factorization theory for functions $\sum_{i=1}^na_ie^{\alpha_iz}$,}  Transactions of the American Mathematical Society, {\bf 29} no. 3 (1927),  584--596.

\bibitem{ritt1929zeros}
\textsc{J. F. Riitt,} \textit{On the zeros of exponential polynomials,}  Transactions of the American Mathematical Society, {\bf 31} no. 4 (1929),  680--686.

\bibitem{shapiro1958expansion}
\textsc{H. S. Shapiro,} \textit{The expansion of mean-periodic functions in series of exponentials,}  Communications on pure and applied mathematics, {\bf } no. 1 (1958),  1--21.


\bibitem{shields1963quotients}
\textsc{A. Shields,} \textit{On quotients of exponential polynomials,}  Communications on Pure and Applied Mathematics, {\bf 16} no. 1 (1963),  27--31.

\bibitem{steinmetz1986verallgemeinerung}
\textsc{R. Steinmetz,} \textit{Eine Verallgemeinerung des zweiten Nevanlinnaschen Hauptsatzes,}  Journal f{\"u}r die reine und angewandte Mathematik, {\bf 368}  (1986),   134-141.

\bibitem{vojta1997cartan}
\textsc{ P. Vojta,} \textit{On Cartan's theorem and Cartan's conjecture,}  American Journal of Mathematics, {\bf 119} no. 1 (1997),  1--17.

\bibitem{vojta2011diophantine}
\textsc{P. Vojta,} \emph{Diophantine approximation and Nevanlinna theory}, {\it Arithmetic geometry,} 111-224, Lecture Notes in Mathematics {\bf 2009}, Springer-Verlag, Berlin, 2011.

\bibitem{wang1996effective}
\textsc{J. T.-Y. Wang,,} \textit{An effective Roth's theorem for function fields,}  The Rocky Mountain Journal of Mathematics, {\bf 26} no. 3 (1996),  1225--1234.

\bibitem{zannier2005diophantine} 
\textsc{U. Zannier}, \emph{Diophantine equations with linear recurrences: an overview of some recent progress}, Journal de Th\'eorie des Nombres de Bordeaux {\bf 17} (2005), 423-435.



\end{thebibliography}

\end{document}